\newtheorem{theorem}{Theorem}[section]
\newtheorem{proposition}[theorem]{Proposition}
\newtheorem{corollary}[theorem]{Corollary}
\newtheorem{conjecture}[theorem]{Conjecture}
\theoremstyle{plain}
\newtheorem{definition}[theorem]{Definition}
\newtheorem{remark}[theorem]{Remark}
\newcommand{\nc}{\newcommand}
\nc{\cat}{\mathcal{V}}
\newcommand{\arxiv}[1]{\href{http://arxiv.org/abs/#1}{\tt arXiv:\nolinkurl{#1}}}
\newcommand{\Spec}{\operatorname{Spec}}
\renewcommand{\dim}{\operatorname{dim}}
\nc{\Coh}{\mathsf{Coh}}
\newcommand{\Sym}{\operatorname{Sym}}
\newcounter{subeqn}
\newcommand{\K}{\mathbf{k}}
\nc{\eE}{\EuScript{E}}
\nc{\eF}{\EuScript{F}}
\newcommand{\Z}{\mathbb{Z}}
\newcommand{\Q}{\mathbb{Q}}
\newcommand{\supp}{\operatorname{supp}}
\newcommand{\Fq}{\mathbb{F}_q}
\newcommand{\C}{\mathbb{C}}
\nc{\ep}{\epsilon}
\newcommand{\bd}{\partial}
\newcommand{\Bd}{\mathbf{d}}
\newcommand{\la}{\leftarrow}
\newcommand{\M}{\mathfrak{M}}
\nc{\Bv}{\mathbf{v}}
  \nc{\Bw}{\mathbf{w}}
\nc{\coho}{\EuScript{G}}
\nc{\sllhat}{\mathfrak{\widehat{sl}}_\ell}
\nc{\slehat}{\mathfrak{\widehat{sl}}_e}
\nc{\glehat}{\mathfrak{\widehat{gl}}_e}
\newcommand{\secs}{\Gamma_\bS}
\nc{\cH}{\mathcal{H}}
\nc{\HC}{\mathbf{HC}}
\nc{\fL}{\mathfrak{L}}
\renewcommand{\la}{\lambda}
\nc{\Tr}{\operatorname{Tr}}
\nc{\tU}{\mathcal{U}}
\newcommand{\al}{\alpha}
\renewcommand{\b}{\beta}
\newcommand{\Hom}{\operatorname{Hom}}
\renewcommand{\bd}{{\mathbf{d}}}
\nc{\lift}{\gamma}
\newcommand{\Ob}{\operatorname{Ob}}
\newcommand{\cO}{\mathcal{O}}
\newcommand{\Ext}{\operatorname{Ext}}
\newcommand{\cD}{\mathcal{D}}
\newcommand{\bS}{\mathbb{S}}
\newcommand{\cQ}{\mathcal{Q}}
\newcommand{\cM}{\mathcal{M}}
\newcommand{\cN}{\mathcal{N}}
\newcommand{\Loc}{\operatorname{Loc}}
\newcommand{\excise}[1]{}
\nc{\wela}{\EuScript{X}}
\nc{\rola}{\EuScript{Y}}
\newcommand{\End}{\operatorname{End}}
\newcommand{\fM}{\mathfrak{M}}
\newcommand{\fN}{\mathfrak{N}}
\newcommand{\fZ}{\mathfrak{Z}}
\newcommand{\mmod}{\operatorname{-mod}}
\newcommand{\thetitle}{Geometry and categorification}
\begin{document}

\renewcommand{\theitheorem}{\Alph{itheorem}}
\usetikzlibrary{decorations.pathreplacing,backgrounds,decorations.markings}
\tikzset{wei/.style={draw=red,double=red!40!white,double distance=1.5pt,thin}}

\noindent {\Large \bf 
\thetitle}
\bigskip\\
{\bf Ben Webster}\footnote{Supported by the NSF under Grant
  DMS-1151473 and the Alfred P. Sloan Foundation}\\
Department of Mathematics,  University of Virginia, Charlottesville, VA
\bigskip\\
{\small
\begin{quote}
\noindent {\em Abstract.} 
We describe a number of geometric contexts where categorification
appears naturally: coherent sheaves, constructible sheaves and sheaves
of modules over quantizations.  In each case, we discuss how ``index
formulas'' allow us to easily perform categorical calculations, and readily
relate classical constructions of geometric representation theory to
categorical ones.
\end{quote}
} 
\medskip 

\section{Introduction}
\label{sec:introduction}

``Categorification'' is a very flexible concept.  It simply refers to
the idea that it can be very interesting to take a set and add morphisms
between its objects.  Its very flexibility means that it is an idea
which must be employed carefully.  It has not proven very effective to
start with a simple algebraic object and to hunt aimlessly for
categorifications of it.  It is much more reliable to have a
``machine'' which produces categories for you in a way that gives you
some hope of understanding how they decategorify.

Thus, geometry is a natural context for categorification because it is a
natural source of categories.  The categories that appear in geometry
also have a natural geometric toolkit for producing functors (using
push-pull along correspondences) and calculation (using index
formulas).  Both of these can be more difficult to understand in other approaches to
categorification, such as algebraic or diagrammatic.  The focus of
this paper will be on describing some of the basic ways of applying
geometry to construct categories, how the underlying geometry can help
with understanding these categories, and how to apply these principles
in some of the most illuminating special cases.

Categorification can also help us to better understand geometry.  The categories which
appear naturally in this context shed light on the nature of the
spaces they are connected to.
Often, the full structure of a category like coherent sheaves on an
algebraic variety is simply ``too rich for our blood,'' an amount of
information that exceeds our ability to take it in.
Decategorification allows us cut out much of the extraneous
complication and understand some of the structure of this category,
and thus learn something about the underlying space.  

This paper is structured around 3 different geometric contexts which
naturally lead to categorification:
\begin{itemize}
\item In Section \ref{sec:k-theory}, we consider categories of coherent sheaves and
  algebraic K-theory.  This is arguably the first place in the
  literature where the modern philosophy of categorification appears,
  and the most likely to be somewhat familiar to the general reader.
\item In Section \ref{sec:funct-sheaf-corr}, we consider categories of
  constructible sheaves and the function-sheaf correspondence.  While
  perhaps a more specialized taste, this is actually an incredibly
  powerful theory, with connections to deep number theory.  In this
  author's opinion, it is one any aspiring categorifier should know a bit of.
\item In Section \ref{sec:sympl-resol}, we consider categories of
  sheaves of modules over quantizations.  This is the least familiar
  context, and one still under development.  Unlike the other two
  examples, we have not had the benefit of having Grothendieck around
  to help us with it. However, progress on it has been made, which we
  will briefly discuss here.
\end{itemize}

\section{K-theory}
\label{sec:k-theory}

The first appearance of categorification in its modern form was in
topological K-theory.  Given a topological space $X$, we can consider
the additive category of vector bundles on $X$.  To better understand
this category we consider its {\bf Grothendieck
group} $K(X)$.  This is the abelian group generated by symbols $[A]$ for $A$ a
vector bundle, subject to the relation
\begin{equation}\label{eq:oplus}
[A]+[B]=[A\oplus B].
\end{equation}
Note, most authors consider vector bundles up to stable equivalence,
which is the same as considering the kernel of the map $K(X) \to \Z$
sending a vector bundle to its rank.  Of course, this construction has
many variations where we consider bundles with additional structure.
See the classic books of Atiyah \cite{AtiyahK} or Karoubi \cite{KarK}
for more details on K-theory.

\subsection{Coherent sheaves}
\label{sec:coherent-sheaves}

This topological introduction is perhaps a little misleading.  We'll
instead be working with algebraic varieties. Both a curse and a blessing of this
geometric approach is that almost every construction that appears has
several variations that make sense in different contexts.  Throughout,
I'll usually work in whatever context is most convenient for me.
Thus, I could consider holomorphic vector bundles on a complex
manifold or locally free coherent sheaves on a scheme or variety
(over the complex numbers or some other field).  For a projective
variety over $\C$, these notions are the same by \cite{GAGA}, so the
reader will not lose much by thinking about whichever one they prefer.

Now, we will more seriously study the category $\Coh(X)$ of coherent
sheaves on a scheme $X$.  Readers who are less happy with algebraic
geometry might also think about the category of modules over a
commutative ring $R$, which is the same as $\Coh(\Spec R)$, the
coherent sheaves on the spectrum $\Spec R$ of $R$.
\begin{definition}
  Attached to this category, we have two natural Grothendieck groups:
  \begin{itemize}
  \item Let $K_0(X):=K_0(\Coh(X))$ be the formal span of $[\mathcal{F}]$
    for all {\bf locally free}\footnote{Those of you thinking about
      $R$-modules should restrict to projective $R$-modules.} coherent
    sheaves $\mathcal{F}$ on $X$, modulo the relation
    \begin{equation}
      [\mathcal{E}]-[\mathcal{F}]+[\mathcal{G}]=0\label{eq:ses}
    \end{equation}
    for any short exact sequence
    \begin{equation}\label{eq:EFG}
      0\longrightarrow \mathcal{E} \longrightarrow
      \mathcal{F}\longrightarrow \mathcal{G}\longrightarrow 0,
    \end{equation} which is a slight modification of the
    relation \eqref{eq:oplus},
    and 
  \item Let $G_0(X) :=G_0(\Coh(X))$ be the span of $[\mathcal{F}]$ for all
    coherent sheaves modulo the same relation \eqref{eq:ses}.
  \end{itemize}
\end{definition}

This
modified relation is needed since neither of these categories is
semi-simple, while every short exact sequence of topological vector
bundles splits.

 Note that  there's an obvious homomorphism $K_0(X)\to G_0(X)$, but this need
  not be an isomorphism. If $X$ is quasi-projective and smooth, then it induces
  a natural isomorphism $K_0(X)\cong G_0(X)$ since every sheaf has a
  finite length locally free resolution\footnote{In fact, by a theorem
  of Serre, a quasi-projective variety is smooth if and only if every
  coherent sheaf has a finite length resolution.}.  On the other hand,
this will not hold in many other cases.

\begin{remark} If $R=\C[t]/(t^2), X=\Spec R$,
  then $K_0(X)$ is spanned by the class of the regular module
  $R$, and $G_0(X)$ is spanned by the class of the 1-dimensional
  module $\C\cong R/tR\cong tR$.  The short exact sequence
  \begin{equation*}
    0\longrightarrow tR \longrightarrow
    R\longrightarrow R/tR\longrightarrow 0,
  \end{equation*}
  shows that under the isomorphism $G_0(X)\cong \Z$, the subgroup
  $K_0(X)$ is sent to $2\Z$.  
 The difference between this case and the smooth case is that the minimal projective resolution of $R/tR$ is
  the infinite complex
  \[\cdots \longrightarrow R\overset{t} \longrightarrow R\overset{t}
  \longrightarrow\cdots \overset{t} \longrightarrow R \longrightarrow
  R/tR.\]
\end{remark}

Note that $K_0(X)$ is a ring, with multiplication given by $[\mathcal{E}][
\mathcal{F}]=[\mathcal{E} \otimes_{\mathcal{O}_X}
\mathcal{F}]$.  We cannot endow $G_0(X)$ with a compatible ring
structure in general since $\otimes_{\mathcal{O}_X}$ is not exact. As
we'll discuss below, we can sometimes fix non-exactness by considering
higher derived functors, but our example above shows that this can't
work here: if $X=\Spec \C[t]/(t^2)$ then $K_0(X)\cong \Z$ as a ring, so 
$G_0(X)$ can only be a ring if $\nicefrac{1}{2}\Z$ is.  The problem is
that $\operatorname{Tor}^i_R(\C,\C)\cong \C$ for all $i$, so we would have to use
the divergent series $\nicefrac{1}{2}=1-1+1-1+\cdots$.

\subsection{Functoriality}
\label{sec:functoriality}

Our first task is to understand how maps between varieties induce
functors between categories and thus maps between K-groups.  Of
course, the desired functors are pushforward and pullback of coherent
sheaves, as defined in \cite[II.5]{Hartshorne}. Unfortunately, interpreted naively, neither of these functors
is exact.  Pushforward $f_*$ is left exact, and thus has right derived
functors $\mathbf{R}^if_*$ (see \cite[III.8]{Hartshorne}), and pullback $f^*$ is right exact and thus
has left derived functors $\mathbf{L}^if^*$.  These have the usual
long exact sequences
\begin{equation}
\cdots \longrightarrow\mathbf{R}^{i-1}f_*\mathcal{G}\longrightarrow\mathbf{R}^if_*\mathcal{E} \longrightarrow
\mathbf{R}^if_*\mathcal{F}\longrightarrow \mathbf{R}^if_*\mathcal{G}
\longrightarrow\mathbf{R}^if_*\mathcal{E}\longrightarrow\mathbf{R}^{i+1}f_*\mathcal{E}  \longrightarrow
\cdots\label{eq:R-LES}
\end{equation}
\begin{equation}
\cdots \longrightarrow\mathbf{L}^{i+1}f^*\mathcal{G}\longrightarrow\mathbf{L}^if^*\mathcal{E} \longrightarrow
\mathbf{L}^if^*\mathcal{F}\longrightarrow \mathbf{L}^if^*\mathcal{G}
\longrightarrow\mathbf{L}^if^*\mathcal{E}\longrightarrow\mathbf{L}^{i-1}f^*\mathcal{E}  \longrightarrow
\cdots\label{eq:L-LES}
\end{equation}
which show that the maps 
\begin{equation}
  \label{eq:push-pull}
  [\mathcal{F}]\mapsto
  \sum_{i=0}^{\infty}(-1)^i[\mathbf{R}^if_*\mathcal{F}]\qquad [\mathcal{G}]\mapsto
  \sum_{i=0}^{\infty}(-1)^i[\mathbf{L}^if^*\mathcal{G}]
\end{equation}
are compatible with the relation \eqref{eq:ses} whenever these
infinite sums make sense. As often happens, the two different maps
above make sense for the two different versions of the Grothendieck
group:
\begin{proposition}
  For a projective morphism $f\colon X\to Y$, the formulas of \eqref{eq:push-pull} define maps 
\begin{equation}
  \label{eq:push-pull-fun}
 G_0(f_*)\colon G_0(X)\to G_0(Y)\qquad  K_0(f^*)\colon K_0(Y)\to K_0(X)
\end{equation}
\end{proposition}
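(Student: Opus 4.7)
The plan is to verify two things for each of the two maps: first, that the infinite alternating sums in \eqref{eq:push-pull} are actually finite (so that the formulas make sense on generators), and second, that the resulting maps on free abelian groups descend through the relation \eqref{eq:ses}. I would treat the two cases separately, since they call on rather different inputs: pushforward requires serious homological finiteness for projective morphisms, while pullback exploits the flatness of locally free sheaves.

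For the map $G_0(f_*)$, I would first invoke the fact that because $f$ is projective (hence proper), each higher direct image $\mathbf{R}^{i}f_{*}\mathcal{F}$ of a coherent sheaf is again coherent on $Y$; this is Grothendieck's finiteness theorem for proper morphisms, applied here via the projective embedding and Serre's theorem on cohomology of projective space. I would then note that $\mathbf{R}^{i}f_{*}\mathcal{F}=0$ for $i$ larger than the relative dimension of $f$ (by \v{C}ech cohomology computed from a cover by $\dim X + 1$ affine opens, or equivalently from Grothendieck's bound on cohomological dimension), so that the alternating sum in \eqref{eq:push-pull} has only finitely many nonzero terms. To check that the relation \eqref{eq:ses} is respected, I would take a short exact sequence as in \eqref{eq:EFG}, write down the long exact sequence \eqref{eq:R-LES}, and observe that the alternating sum of classes of terms in a long exact sequence vanishes in $G_{0}(Y)$ (a telescoping argument using \eqref{eq:ses} applied repeatedly to the short exact sequences of kernels/images extracted from the long exact sequence).

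For the map $K_{0}(f^{*})$, the situation simplifies dramatically. Since $K_{0}(Y)$ is generated by classes of locally free sheaves, and every locally free sheaf is flat over $\mathcal{O}_{Y}$, the higher derived pullbacks $\mathbf{L}^{i}f^{*}\mathcal{G}$ vanish for all $i>0$ when $\mathcal{G}$ is locally free. Thus the formula in \eqref{eq:push-pull} collapses to $[\mathcal{G}]\mapsto[f^{*}\mathcal{G}]$, and the sum is trivially finite. Moreover, a short exact sequence of locally free sheaves on $Y$ remains exact after applying $f^{*}$ (by flatness), and its terms are again locally free, so the relation \eqref{eq:ses} on $Y$ is carried to the corresponding relation on $X$.

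The only step that is genuinely nontrivial is the coherence and vanishing of higher direct images under a projective morphism; everything else is a formal consequence of the long exact sequences \eqref{eq:R-LES} and \eqref{eq:L-LES} together with flatness. I would therefore either cite \cite[III.8]{Hartshorne} for those finiteness results or sketch the reduction to Serre's computation of $H^{i}(\mathbb{P}^{n},\mathcal{O}(d))$ via a factorization of $f$ through a closed immersion into a relative projective space.
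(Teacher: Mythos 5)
Your proposal is correct and follows essentially the same route as the paper's own proof: coherence of the higher direct images (Grothendieck/Serre finiteness) plus vanishing in high degrees for the $G_0(f_*)$ case, and flatness of locally free sheaves (so no higher derived pullbacks) for the $K_0(f^*)$ case. You spell out the telescoping argument showing compatibility with the relation \eqref{eq:ses}, which the paper leaves implicit after displaying the long exact sequences \eqref{eq:R-LES} and \eqref{eq:L-LES}, but this is elaboration rather than a different approach.
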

\begin{proof}
  For any coherent sheaf $\mathcal{F}$ on $X$, the sheaves $\mathbf{R}^if_*\mathcal{F}$ are coherent by
 \cite[III.8.8(b)]{Hartshorne}, and vanish for $i>\dim X$ by \cite[III.2.7]{Hartshorne}.
  Thus, the first sum of \eqref{eq:push-pull} is finite and
  well-defined.  Note that we could not do this for $K_0$ since
  $\mathbf{R}^if_*\mathcal{F}$ might not be locally free even if
  $\mathcal{F}$ is.  

For a locally free sheaf $\mathcal{G}$ on $Y$, the pullback
$f^*\mathcal{G}$ is locally free and the higher pullbacks
$\mathbf{L}^if^*\mathcal{G}$ for $i>0$ are 0, so the second sum of
\eqref{eq:push-pull} is well-defined (and in fact only has 1 non-zero
term).  
\end{proof}
The map $\Spec \C\to \Spec R$ induced by the unique ring homomorphism
$R\to \C$ illustrates that it's impossible to define these maps on the
``wrong'' Grothendieck group:
\begin{itemize}
\item the regular module $\C$ is free, but its pushforward $f_*\C$ is
  just the module $R/tR$, which doesn't have a corresponding class in
  $K_0$.
\item the pullback $\mathbf{L}^if^*(R/tR)\cong \C$ for all $i$, so the
  second sum in \eqref{eq:push-pull} doesn't converge.  
\end{itemize}
Of course, this map is somewhat pathological, and there are conditions
one can impose that will guarantee this maps make sense.  In
particular:
\begin{itemize}
\item If the source and target are smooth and quasi-projective,
  then we can freely switch between $K_0$ and $G_0$ and so $G_0(f^*)$
  and $K_0(f_*)$ make sense for a projective (or more generally,
  proper) morphism.
\item If $f$ is flat, then $f^*$ is exact, and $G_0(f^*)$ is
  well-defined. 
\end{itemize}

\subsection{Chern character and index formulas}
\label{sec:chern-char-index}

Thus, we can supply ourselves with a great number of categories and
exciting functors between them.  But as we said in the introduction,
this is particularly powerful because we can understand the
Grothendieck group and calculate the behavior of these maps in
geometric terms.  Assume from now on that $X$ is smooth and
projective over $\C$.
\begin{theorem}
  There is a unique homomorphism
  $\operatorname{ch}\colon K_0(X)\cong G_0(X)\to H^*(X;\Q)$ called
  {\bf Chern character} which is:
  \begin{enumerate}
  \item compatible with pullback
    \[\operatorname{ch} (f^*\mathcal{F})=f^*\operatorname{ch}
    (\mathcal{F})\]
   \item  sends the class $[\mathsf{L}]$ of a line bundle $\mathsf{L}$ to \[\exp(c_1( \mathsf{L}))=1+c_1(\mathsf{L})+\frac{c_1(\mathsf{L})^2}{2}+\cdots\]
    where $c_1(\mathsf{L})$ is the first Chern class, the cohomology
    class dual to the divisor defined by a meromorphic section of
    $\mathsf{L}$.
  \end{enumerate}
\end{theorem}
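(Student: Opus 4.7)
The plan is to prove uniqueness via the splitting principle and existence by assembling $\operatorname{ch}$ from Grothendieck's axiomatic Chern classes. Since $X$ is smooth and projective, $K_0(X)\cong G_0(X)$ is generated by classes of vector bundles, so it suffices to specify $\operatorname{ch}$ on locally free sheaves and to verify compatibility with \eqref{eq:ses} for short exact sequences of vector bundles.

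For uniqueness, given any vector bundle $\mathcal{E}$ of rank $n$, I would pull back to the complete flag bundle $\pi\colon F(\mathcal{E})\to X$, on which $\pi^*\mathcal{E}$ acquires a filtration with line-bundle successive quotients $\mathsf{L}_1,\dots,\mathsf{L}_n$. The associated short exact sequences and the relation \eqref{eq:ses} give $[\pi^*\mathcal{E}]=\sum_i[\mathsf{L}_i]$ in $K_0(F(\mathcal{E}))$. Since any candidate $\operatorname{ch}$ is a homomorphism, property~(2) forces $\operatorname{ch}(\pi^*\mathcal{E})=\sum_i\exp(c_1(\mathsf{L}_i))$, and by property~(1) this equals $\pi^*\operatorname{ch}(\mathcal{E})$. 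Iterating the projective bundle theorem shows that $\pi^*\colon H^*(X;\Q)\to H^*(F(\mathcal{E});\Q)$ is injective, so $\operatorname{ch}(\mathcal{E})$ is determined.

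For existence, I would start from the classical Chern classes $c_i(\mathcal{E})\in H^{2i}(X;\Q)$ satisfying functoriality under pullback, the normalization $c_1(\mathsf{L})$ on line bundles, and the Whitney formula $c(\mathcal{E})=c(\mathcal{E}')c(\mathcal{E}'')$ for short exact sequences, where $c=1+c_1+c_2+\cdots$ is the total Chern class. Writing $c(\mathcal{E})=\prod_i(1+\alpha_i)$ formally, I would define
\[
\operatorname{ch}(\mathcal{E}) := \sum_{i=1}^{n}\exp(\alpha_i).
\]
The coefficient in degree $k$ is $(1/k!)\sum_i\alpha_i^k$, a symmetric polynomial in the $\alpha_i$, which by the fundamental theorem of symmetric functions (together with Newton's identities) is a polynomial in the $c_j(\mathcal{E})$ with rational coefficients, hence a well-defined class in $H^*(X;\Q)$. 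The line-bundle normalization is immediate. Whitney tells us that the multiset of roots $\alpha_i$ for $\mathcal{E}$ is the concatenation of those for $\mathcal{E}'$ and $\mathcal{E}''$, which gives $\operatorname{ch}(\mathcal{E})=\operatorname{ch}(\mathcal{E}')+\operatorname{ch}(\mathcal{E}'')$ and thus compatibility with \eqref{eq:ses}. Pullback compatibility is inherited from that of the individual Chern classes.

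The principal obstacle is the foundational input: producing the Chern classes and verifying the Whitney sum formula. This rests on the projective bundle theorem
\[
H^*(\mathbb{P}(\mathcal{E});\Q)\cong H^*(X;\Q)[\xi]/(\xi^n+c_1\xi^{n-1}+\cdots+c_n),
\]
which I would quote from a standard reference rather than reprove. Once this is granted, the remaining verifications reduce to symmetric-function bookkeeping and the filtration yoga on the flag bundle already employed in the uniqueness argument.
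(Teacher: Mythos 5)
Your proposal is correct and follows essentially the same route the paper takes: the paper likewise invokes the splitting principle via the complete flag bundle, defines Chern roots $\alpha_i$ as first Chern classes of the line-bundle subquotients, and observes that the homogeneous components of $\operatorname{ch} = \sum_i e^{\alpha_i}$ are symmetric polynomials in the roots and hence polynomials in the Chern classes. You are somewhat more thorough than the text---making the uniqueness argument explicit via injectivity of $\pi^*$ and spelling out the role of the Whitney sum formula and the projective bundle theorem, which the paper leaves as a sketch of well-definedness---but no genuinely different idea is involved.
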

  This definition is well-defined because of the splitting principle:
  given a vector bundle $E\to X$ of rank $n$, we can consider the flag
  space $\mathsf{Fl}(E)$ given by pairs of a point $x\in X$ and a
  complete flag $V_1\subset V_2\subset \cdots \subset V_n=E_x$.  The
  cohomology ring is given by
  \[H^*(\mathsf{Fl}(E);\Q)\cong
  H^*(X;\Q)[\al_1,\dots,\al_n]/(e_k(\boldsymbol{\alpha})=c_k(E));\]
  here the classes $\alpha_i$ are the first Chern classes of the
  induced vector bundles $V_i/V_{i-1}$.  We can use
  $e_k(\boldsymbol{\alpha})=c_k(E)$ as a definition of the Chern
  classes, and $\operatorname{ch}
    (E)=\sum_{i=1}^ne^{\al_i}$.  Note that this implicitly gives a
    complicated but concrete formula for $\operatorname{ch}$ in terms
    of Chern classes, since each homogeneous part is a
symmetric polynomial in Chern roots $\al_i$, and thus a polynomial in
the Chern classes. We can also define the Todd class $\operatorname{td}(E)=\prod_{i=1}^n
\frac{\al_i}{1-e^{-\al_i}}$.

\begin{remark}
Note that even if $X$ is defined over a field other than $\C$, we can
still define this homomorphism, with the target given by the Chow
ring $A^*(X;\Q)$, the ring spanned by subvarieties of $X$ modulo rational
equivalence.  
\end{remark}
The Chern character is certainly not an
isomorphism, but is not so far from being one either:
\begin{proposition}
  The kernel of $\operatorname{ch}\otimes\, \Q\colon K_0(X)\otimes \Q\to
  H^*(X;\Q)$ is spanned by elements $[\mathcal{F}]-[\mathcal{G}]$
  where we have an isomorphism of underlying topological $\C$-vector
  bundles $\mathcal{F}\cong_{\C}\mathcal{G}$.  
\end{proposition}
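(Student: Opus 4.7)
The plan is to factor the Chern character through topological K-theory and reduce the statement to the classical rational Atiyah--Hirzebruch isomorphism. Since every short exact sequence of holomorphic (hence continuous) complex vector bundles on $X$ splits topologically, the assignment sending a locally free sheaf to its underlying topological bundle descends to a group homomorphism $K_0(X)\to K^0_{\mathrm{top}}(X)$. Both this comparison map and the topological Chern character $\operatorname{ch}_{\mathrm{top}}\colon K^0_{\mathrm{top}}(X)\to H^*(X;\Q)$ agree with $\operatorname{ch}$ on line bundles, so by the splitting principle the algebraic Chern character factors as
\[\operatorname{ch}\colon K_0(X)\longrightarrow K^0_{\mathrm{top}}(X)\xrightarrow{\operatorname{ch}_{\mathrm{top}}} H^*(X;\Q).\]

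The theorem of Atiyah--Hirzebruch states that $\operatorname{ch}_{\mathrm{top}}\otimes \Q$ is an isomorphism for any finite CW complex. Granting this, $\ker(\operatorname{ch}\otimes \Q)$ equals the kernel of the rationalized comparison map $K_0(X)\otimes \Q\to K^0_{\mathrm{top}}(X)\otimes \Q$. The ``$\supseteq$'' inclusion of the proposition is then immediate: if $\mathcal F \cong_\C \mathcal G$ as topological bundles, then $[\mathcal F]=[\mathcal G]$ in $K^0_{\mathrm{top}}(X)$, so $[\mathcal F]-[\mathcal G]$ lies in the kernel.

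For the reverse inclusion, I would write a kernel element $\alpha$ as $\tfrac{1}{N}([\mathcal F]-[\mathcal G])$ for some locally free $\mathcal F,\mathcal G$. Vanishing in $K^0_{\mathrm{top}}(X)\otimes\Q$ forces $M([\mathcal F]-[\mathcal G])=0$ in $K^0_{\mathrm{top}}(X)$ for some positive integer $M$, which by the definition of topological K-theory via stable equivalence yields a topological isomorphism
\[\mathcal F^{\oplus M}\oplus \mathcal{O}_X^{\oplus k}\;\cong_\C\;\mathcal G^{\oplus M}\oplus \mathcal{O}_X^{\oplus k}\]
for some $k$. Hence $M([\mathcal F]-[\mathcal G])=[\mathcal F^{\oplus M}\oplus \mathcal{O}_X^{\oplus k}]-[\mathcal G^{\oplus M}\oplus \mathcal{O}_X^{\oplus k}]$ is a difference of topologically isomorphic algebraic bundles, and $\alpha = \tfrac{1}{NM}$ times this expression lies in the claimed $\Q$-span. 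The single nontrivial input is the rational Atiyah--Hirzebruch isomorphism; the remaining content is formal bookkeeping with stable equivalence, and the only real care needed is to juggle the stabilizations $\oplus \mathcal{O}_X^{\oplus k}$ so that the representatives are honest algebraic bundles.
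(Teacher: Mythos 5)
Your proof is correct. The paper states this proposition without supplying an argument; the remark that follows it in the text --- that the map would be an isomorphism if one instead used the topological K-theory of $X$ as a manifold --- is precisely the observation your proof formalizes. The individual steps all check out: the comparison map $K_0(X)\to K^0_{\mathrm{top}}(X)$ is well-defined because short exact sequences of complex vector bundles split topologically; the factorization $\operatorname{ch}=\operatorname{ch}_{\mathrm{top}}\circ(\text{comparison})$ follows from the uniqueness clause of the theorem preceding the proposition, since both sides are pullback-compatible and send a line bundle $\mathsf L$ to $\exp(c_1(\mathsf L))$; the rational Atiyah--Hirzebruch isomorphism reduces the question to the kernel of the rationalized comparison map; and your care in stabilizing by $\mathcal{O}_X^{\oplus k}$ keeps the two topologically isomorphic representatives inside the algebraic category, which is the one place a careless argument could slip. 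Two small points you might make explicit. First, $\operatorname{ch}_{\mathrm{top}}\otimes\Q$ is an isomorphism onto $H^{\mathrm{even}}(X;\Q)$, not all of $H^*(X;\Q)$; this does not affect the kernel, since the inclusion $H^{\mathrm{even}}\hookrightarrow H^*$ is injective, but it should be noted. Second, when you pass from $[\mathcal F^{\oplus M}]=[\mathcal G^{\oplus M}]$ in $K^0_{\mathrm{top}}(X)$ to an honest stable isomorphism $\mathcal F^{\oplus M}\oplus \mathcal{O}_X^{\oplus k}\cong_\C\mathcal G^{\oplus M}\oplus \mathcal{O}_X^{\oplus k}$ with the same $k$ on both sides, you are implicitly using that the two bundles have equal rank; this holds because rank is a well-defined homomorphism $K^0_{\mathrm{top}}(X)\to\Z$ (here $X$ is connected, being projective), so equality in $K^0_{\mathrm{top}}$ forces equal ranks.
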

The inelegance of this theorem comes from the fact that we are
relating two ``incompatible'' structures.  If we consider the
topological K-theory of $X$ as a manifold, or replace $H^*(X;\Q)$ by
the Chow ring $A^*(X;\Q)$, then this map will be an isomorphism.
For many nice varieties, we have an isomorphism $H^*(X;\Q)\cong
A^*(X;\Q)$, so the map above becomes an isomorphism.  For example,
this is the case for any variety which has an affine paving.

While there are many things to be said about K-theory, perhaps the
most important for us is the first example of an index formula: the
Grothendieck-Hirzebruch-Riemann-Roch theorem.   
\begin{theorem}[Grothendieck, Hirzebruch; \mbox{\cite[\S 7]{BorSer}}]
Let $X,Y$ be smooth and projective over $\C$.  Then for any map
$f\colon X\to Y$ and any locally free sheaf $\mathcal{F}$, we have
that
\[ \operatorname{ch}(f_*\mathcal{F})
\operatorname{td}(T_Y)=f_*(\operatorname{ch}(\mathcal{F})
\operatorname{td}(T_X)).\]
In particular, in the case where $Y=\Spec \C$ is a point, we have that 
\[\sum_{i=0}^\infty(-1)^i\dim  H^i(X;\mathcal{F})=\int_X \operatorname{ch}(\mathcal{F})
\operatorname{td}(T_X).\]
\end{theorem}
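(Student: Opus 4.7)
The plan is to reduce the theorem to two fundamental cases via a factorization of projective morphisms, and then verify each case directly. The key observation is that the stated identity is compatible with composition: if $f=g\circ h$, then applying the identity for $g$ to the pushforward $h_*\mathcal{F}$ and substituting the identity for $h$ applied to $\mathcal{F}$ yields the identity for $f$. Consequently, it suffices to verify the theorem on a class of morphisms whose compositions generate all projective maps.

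First, I would factor an arbitrary projective $f\colon X\to Y$ as a composition $X\hookrightarrow Y\times\mathbb{P}^N\to Y$, where the first arrow is a closed immersion and the second is the smooth projection. For the projection $p\colon Y\times\mathbb{P}^N\to Y$, I would invoke the projective bundle formula for both $G_0$ and $H^*$, which presents each as a free module over the corresponding invariant of $Y$ generated by the classes $[\mathcal{O}(k)]$ and by the powers $H^k$ of the hyperplane class. Applying the projection formula reduces the identity to the fiber $\mathbb{P}^N\to\Spec\C$ with input $\mathcal{O}(k)$, where both sides are entirely explicit: $\operatorname{ch}(\mathcal{O}(k))=e^{kH}$, $\operatorname{td}(T_{\mathbb{P}^N})=\bigl(H/(1-e^{-H})\bigr)^{N+1}$, and the Euler characteristic equals $\binom{N+k}{N}$. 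Matching the two sides is then a standard residue computation.

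The main obstacle, and the technical heart of the theorem, is the closed immersion case $i\colon X\hookrightarrow Y$. For this I would apply deformation to the normal cone to reduce to the model situation of the zero section $i_0\colon X\hookrightarrow N$ of the normal bundle $N=N_{X/Y}$. There the Koszul resolution yields the identity
\[[i_{0,*}\mathcal{O}_X]=\sum_{k\geq 0}(-1)^k[\Lambda^k N^\vee]\]
in $G_0(N)$, after which the projection formula reduces everything to the universal Chern-root identity
\[\operatorname{ch}\!\Bigl(\sum_{k}(-1)^k\Lambda^k N^\vee\Bigr)=c_{\operatorname{top}}(N)\cdot\operatorname{td}(N)^{-1}.\]
This follows from the splitting principle: for a line bundle $L$ with first Chern class $\alpha$ one computes $\operatorname{ch}(1-L^\vee)=1-e^{-\alpha}=\alpha\cdot\operatorname{td}(L)^{-1}$, and taking the product over all Chern roots of $N$ yields exactly the right-hand side. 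This is the step where the Todd class genuinely emerges from the geometry rather than from formal bookkeeping.
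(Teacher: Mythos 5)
The paper states this theorem without proof, citing Borel--Serre, so there is no in-text argument to compare against. Your outline is correct and is in essence the proof strategy of the cited reference: reduce by compositionality to the standard factorization of a projective map as a closed immersion followed by a projection $Y\times\mathbb{P}^N\to Y$, dispatch the projection via the projective bundle formula and the explicit Euler-characteristic computation on $\mathbb{P}^N$, and handle the closed immersion via the Koszul resolution together with the Chern-root identity $\operatorname{ch}\bigl(\sum_k(-1)^k[\Lambda^kN^\vee]\bigr)=c_{\operatorname{top}}(N)\operatorname{td}(N)^{-1}$, which you verify correctly. The one genuine difference from the reference is the immersion step: Borel--Serre (1958) predates deformation to the normal cone and instead reduce via the blow-up of $Y$ along $X$ and a more intricate computation on the exceptional divisor; your deformation-to-the-normal-cone argument is the later Baum--Fulton--MacPherson streamlining, which lands on the same local Koszul calculation while avoiding the blow-up bookkeeping. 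The small technicalities you elide --- transferring the equality through the flat family over $\mathbb{P}^1$, the non-compactness of the total space of $N$, tracking $\pi^*\mathcal{F}$ through the Koszul complex, and interpreting $\binom{N+k}{N}$ for the full generating range of twists including negative $k$ --- are standard and do not affect correctness, though in a complete write-up they would need a sentence each.
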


Since this is a particularly focus of categorifiers, let us note that
using internal $\Hom$ allows us to compute the Euler form on $K_0(X)$,
which is defined by 
\[\langle
[\mathcal{E}],[\mathcal{F}]\rangle=\sum_{i=0}^\infty\Ext^i(\mathcal{E},\mathcal{F}).\]
This sum is well-defined because we have that 
\[\Ext^i(\mathcal{E},\mathcal{F})=H^i(X;\mathcal{E}^\vee\otimes
\mathcal{F}),\] which vanishes for $i>\dim X$.  This also allows us to
see that \[\langle[\mathcal{E}],[\mathcal{F}]\rangle=\int_X \operatorname{ch}(\mathcal{F})\operatorname{ch}(\mathcal{E}^\vee)
\operatorname{td}(T_X).\]
\begin{subsection}{The Weyl character formula}
  One of the most important categorification problems is understanding
  the characters of representations of groups.  For complex simple Lie
  groups, this problem is solved by the Weyl character formula.  While
  there are many proofs of this beautiful formula, one of the most
  remarkable is obtained by combining GHRR as above with the
  Borel-Weil-Bott theorem, which shows that for each highest weight
  $\la$, there is a line bundle $\mathcal{L}_\la$ on the flag variety
  $X:=G/B$ of $G$ such that
  $H^0(X;\mathcal{L}_\la)\cong V_\la$ and $H^i(X;\mathcal{L}_\la)=0$
  for $i>0$.  Thus, we have that:
\[\dim V_\la=\int_X \operatorname{ch}(\mathcal{L}_\la)
\operatorname{td}(T_X).\]  
Consider the symmetric algebra $\Sym^\bullet(\mathfrak{t}^*_\Q)$  in the weights of
the maximal torus $T$, and let $\epsilon(f)$ be the constant term of $f\in \Sym^\bullet(\mathfrak{t}^*_\Q)$.
We can identify $H^*(X;\Q)$ with the
symmetric algebra $\Sym^\bullet(\mathfrak{t}^*_\Q)$ modulo $\Sym^\bullet(\mathfrak{t}^*_\Q)^W_+$,
the polynomials with $\epsilon(f)=0$ which are symmetric under the action of the Weyl
group; in this realization, the operation of integration is given by 
\begin{equation}
\int_Xp=\epsilon\Bigg(\frac{\sum_{w\in W}(-1)^ww\cdot p}{\prod_{\alpha\in
    \Delta^+}\alpha}\Bigg).\label{eq:int}
\end{equation}
The positive roots $\al\in  \Delta^+$ are the Chern roots of the
tangent bundle, so we have $\operatorname{td}(T_X)=\prod_{\al\in
  \Delta^+}\frac{\al}{1-e^{-\al}}$, and the Chern character of the line
bundle is 
$\operatorname{ch}(\mathcal{L}_\la)=e^{\la}$, so \[\operatorname{ch}(\mathcal{L}_\la)
\operatorname{td}(T_X) =\frac{e^{\la+\rho}\prod_{\al\in
  \Delta^+}\al}{\prod_{\al\in \Delta^+}e^{\al/2}-e^{-\al/2}.}\] Thus, we have that 
\begin{align}
\dim V_\la&=\int_X \operatorname{ch}(\mathcal{L}_\la)
\operatorname{td}(T_X) \notag \\
&=\epsilon\Bigg(\frac{1}{\prod_{\alpha\in \label{eq:WCF}
    \Delta^+}\alpha}\sum_{w\in W}(-1)^w \frac{e^{w(\la+\rho)}\prod_{\al\in
  \Delta^+}\al}{\prod_{\al\in
  \Delta^+}e^{\al/2}-e^{-\al/2}}\Bigg)\\
&=\epsilon\Bigg(\frac{\sum_{w\in W}(-1)^w
e^{w(\la+\rho)}}{\prod_{\al\in
  \Delta^+}e^{\al/2}-e^{-\al/2}}\Bigg).\notag
\end{align}

If we evaluate the RHS using L'H\^opital's rule,  we obtain the Weyl dimension formula:
\[\dim V_\la=\prod_{\al\in \Delta^+}\frac{\langle \la+\rho,\al\rangle}{\langle \rho,\al\rangle}.\]

We can also think $T$-equivariantly for the natural
torus action on $X$.  In
this case, the equation \begin{equation*}
\int_Xp=\frac{\sum_{w\in W}(-1)^ww\cdot p}{\prod_{\alpha\in
    \Delta^+}\alpha},\label{eq:eqint}
\end{equation*} 
gives the integral in equivariant cohomology, valued in
$H_T(\operatorname{pt})\cong {\Sym}^\bullet(\mathfrak{t}^*_\Q)$. We can then interpret \eqref{eq:WCF} in the
completion of this
$T$-equivariant cohomology:
\begin{equation*}
\int_X \operatorname{ch}(\mathcal{L}_\la)
\operatorname{td}(T_X)=\frac{1}{\prod_{\alpha\in
    \Delta^+}\alpha}\sum_{w\in W}(-1)^w \frac{e^{w(\la+\rho)}\prod_{\al\in
  \Delta^+}\al}{\prod_{\al\in
  \Delta^+}e^{\al/2}-e^{-\al/2}}=\frac{\sum_{w\in W}(-1)^w
e^{w(\la+\rho)}}{\prod_{\al\in
  \Delta^+}e^{\al/2}-e^{-\al/2}}.\label{eq:eqWCF}
\end{equation*}
The
Chern character of $V_\la$, considered as a $T$-equivariant coherent
sheaf on a point, is the sum over weight spaces $\sum_{\mu}\dim
(V_\la)_\mu e^\mu$, so from equivariant GHRR, we obtain the usual Weyl character formula:
\[\sum_{\mu}\dim
(V_\la)_\mu e^\mu=\frac{\sum_{w\in W}(-1)^w
e^{w(\la+\rho)}}{\prod_{\al\in
  \Delta^+}e^{\al/2}-e^{-\al/2}}.\]
\end{subsection}

\section{The function-sheaf correspondence}
\label{sec:funct-sheaf-corr}

\subsection{Euler characteristic}

\nc{\Con}{\mathsf{Sh}}

Instead of coherent sheaves, we can also consider constructible
sheaves. Let $k$ be a commutative ring, and for any topological space
$X$, we let $k_X$ denote the sheaf of locally
  constant\footnote{The presheaf that assigns the constant $k$-valued
    functions to any open subset is not a sheaf, since if a subset
    $U=U_1\cup U_2$ is the union of two open subsets with $U_1\cap
    U_2=\emptyset$, then, a function that takes value $a$ on $U_1$ and
  $b$ on $U_2$ must define a section of the sheafification by the
  gluing property.  The locally constant functions are, essentially by
definition, the sheafification of this presheaf.} $k$-valued functions on $U_i$.
\begin{definition}
We call a sheaf of $k$-modules
  $\mathscr{F}$ on a topological space $X$ a {\bf local system} if it is
  locally constant, i.e. if there is a finite open cover $\{U_i\}$,
  such that $\mathcal{F}|_{U_i}\cong k_{U_i}^{\oplus m}$ for some
  integer $m$. 
\end{definition}
A great example of a local
system is the flat sections of a vector bundle $E$ with a flat connection on
a compact manifold.
While globally there may be no sections, there is always an open cover
$U_i$ where for any $u\in U_i$, each element of the fiber $E_u$ 
extends uniquely to a covariantly constant section of $E$ over $U_i$.
Thus a local system is a sheaf where nearby fibers are isomorphic, but
in order to make this identification canonical, we have to shrink to a
smaller neighborhood (and the identification may depend on the
neighborhood we choose).

Now, assume that the topological space we consider is a complex
quasi-projective variety.  This space has two natural topologies both
induced from the embedding in projective space: the {\bf Zariski
  topology}, whose generating open sets are the locus where some
meromorphic function on projective space has neither a zero nor a
pole, and the {\bf classical topology} which is induced by the usual
smooth manifold structure on $\mathbb{CP}^n$.  Constructible sheaves are sheaves of finitely generated $k$-modules on
algebraic varieties that are locally modeled on
local systems in the {\it classical} topology.   We can define them inductively
by saying that a sheaf $\mathscr{F}$ on a variety of dimension $n$ is
{\bf constructible}
if there is a Zariski open subset $U$ such that $\mathscr{F}|_U$ is a
local system in the classical topology and on the
complementary Zariski closed subset $V$ (of lower dimension), $\mathscr{F}|_V$
is constructible\footnote{Note the odd mix of the classical and
  Zariski topologies here; this will be eased a bit when we consider
  the \`etale topology.}.   We let $\Con(X)$ be the category of
constructible sheaves on $X$, and $D^b\Con(X)$ the subcategory of the
bounded derived category of all sheaves of vector spaces where all
complexes have constructible cohomology.

Certain aspects of constructible life are actually much simpler than
coherent sheaves.  For example, instead of an index formula valued in
homology we obtain one valued in the 
space of constructible functions on $X$; as with sheaves, we
inductively define {\bf constructible functions} on a variety of dimension
$n$ to be those constant on a Zariski open subset $U$, with the restriction
to the complement constructible.  Let $C(X)$ be the ring of
constructible functions on $X$ (with pointwise addition and
multiplication).  The map from sheaves to functions is
one version of the function-sheaf correspondence:
\[\phi_{\mathscr{F}}(x)=\dim  \mathscr{F}_x.\] More generally, for a
complex of sheaves with constructible cohomology, we take 
\[\phi_{\mathscr{F}}(x)=\sum_{i\in \Z}(-1)^i\dim  \mathcal{H}^i(\mathscr{F}_x).\]
Here, we use $\mathcal{H}$ to emphasize we are just taking cohomology of
a complex, not any kind of sheaf cohomology.  
The long exact sequence (of sheaves) induced by a short exact sequence
(of complexes of sheaves) shows that this map factors through the
Grothendieck group $G_0(\Con(X))$, that this: \begin{equation}
\phi_{\mathcal{E}}-\phi_{\mathcal{F}}+\phi_{\mathcal{G}}=0\label{eq:ses2}
\end{equation} whenever we have a short exact sequence of complexes like \eqref{eq:EFG}.  Furthermore, 
note 
that $\phi_{f^{-1}\mathscr{F}} =f^*\phi_{\mathscr{F}}$, where $f^*$ is
the usual pullback of functions.   However, for constructible sheaves,
there are two kinds of natural pushforward: the usual pushforward
$f_*\mathscr{F}(U)=\mathscr{F}(f^{-1}(U))$ and the pushforward with
proper supports:
\[f_!\mathscr{F}(U)=\{s\in \mathscr{F}(f^{-1}(U))|
\operatorname{supp}(s)\to U\text{ is proper}\}.\] While the former is
probably more familiar, it is actually more convenient for us to use
the latter.

Just like coherent sheaves, constructible sheaves have an index
 formula.  

\begin{definition}
  For a constructible function $\phi$ on $X$ and a map $f\colon X\to
  Y$, we let $f_!$ be the unique linear map such that:
  \begin{itemize}
  \item If $X=U\sqcup V$ with $U$ open and $V$ closed, we have
    $f_!\phi=f_!\phi|_U+f_!\phi|_V$.
\item If $\phi=1$ is the constant function, then
  $f_!1(y)=\chi_c(f^{-1}(y))$ is the compactly-supported Euler
  characteristic $\chi_c(X)=\sum_{i=0}^\infty (-1)^i\dim H^i_c(X)$ of the fiber
  over each point.
  \end{itemize}
The pushforward in the case where $Y=*$ is a point is sometimes called
Euler integration and denoted $\int \phi\, d\chi$.  Pushforward is
simply performing this integration over fibers.  
\end{definition}
Note, this is very close to the functor defined in \cite{McChern}, but
using compactly supported Euler characteristics.  This pushforward
is a small modification of that defined in \cite[\S 1.4]{GrinMc}, by
using the standard trick of factoring $f_!$ into the extension by zero into
a compactification, and then usual pushforward.

With this convention, we can now show:
\begin{theorem}
  $\phi_{f_!\mathscr{F}}=f_!\phi_{\mathscr{F}}$ 
\end{theorem}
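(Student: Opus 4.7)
The plan is to reduce to the case $Y=\mathrm{pt}$ by proper base change, and then to reduce that case to the situation of a local system by d\'evissage.

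\textbf{Reduction to $Y=\mathrm{pt}$.} The identity is pointwise on $Y$, so fix $y\in Y$. By proper base change for $f_!$, the stalk $(f_!\mathscr{F})_y$ is computed as $R\Gamma_c(f^{-1}(y),\mathscr{F}|_{f^{-1}(y)})$, and taking alternating sums of dimensions gives
\[
\phi_{f_!\mathscr{F}}(y)\;=\;\chi_c\bigl(f^{-1}(y),\mathscr{F}|_{f^{-1}(y)}\bigr).
\]
On the other hand, by the defining clause of $f_!$ on constructible functions, $(f_!\phi_{\mathscr{F}})(y)=\int_{f^{-1}(y)}\phi_{\mathscr{F}}|_{f^{-1}(y)}\,d\chi_c$. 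So it is enough to prove the theorem when $Y$ is a point, i.e.\ that for every constructible complex $\mathscr{G}$ on a quasi-projective variety $Z$,
\[
\chi_c(Z,\mathscr{G})\;=\;\int_Z\phi_{\mathscr{G}}\,d\chi_c.
\]

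\textbf{D\'evissage.} Both sides of this identity are additive for distinguished triangles in $\mathscr{G}$: the left by the long exact sequence of compactly supported cohomology, the right by the relation \eqref{eq:ses2}, which factors $\phi$ through $G_0(\Con(Z))$. So we may assume $\mathscr{G}$ is a single constructible sheaf concentrated in degree $0$. Both sides are also additive under a decomposition $Z=U\sqcup V$ with $U\subseteq Z$ open and $V=Z\setminus U$ closed: the left by the excision long exact sequence for $\chi_c$, the right by the defining clause of $f_!$ on constructible functions. Proceeding by induction on $\dim Z$, pick a Zariski open dense $U\subseteq Z$ on which $\mathscr{G}|_U$ is a local system $\mathcal{L}$ of some rank $m$ in the classical topology, and apply the inductive hypothesis to the lower-dimensional complement.

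\textbf{Base case.} Since $\phi_{\mathcal{L}}\equiv m$, the identity reduces to $\chi_c(U,\mathcal{L})=m\cdot\chi_c(U)$. This is the only place where genuine topological input is required. It follows from choosing a finite \'etale cover $p\colon\tilde U\to U$ that trivializes $\mathcal{L}$, so that $p^*\mathcal{L}\cong k_{\tilde U}^{\oplus m}$, and combining multiplicativity of $\chi_c$ under finite covers with the projection formula to compare $\chi_c(U,\mathcal{L})$ to $m\cdot\chi_c(\tilde U)$; alternatively, one triangulates $U$ finely enough that $\mathcal{L}$ is constant on each open cell and invokes additivity of $\chi_c$ on the resulting cell decomposition. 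The main obstacle is exactly this base case: the rest is formal yoga around additivity and proper base change, but $\chi_c(U,\mathcal{L})=m\cdot\chi_c(U)$ is where one must actually use that compactly supported Euler characteristic is insensitive to twisting by a local system.
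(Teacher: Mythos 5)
Your proof follows essentially the same strategy as the paper's: d\'evissage by an open/closed decomposition with induction on the dimension of the source, reduction to a local system, reduction from a local system to a constant sheaf, and base change. The only structural difference is the order in which you invoke base change: you apply it at the very start, reducing to the pointwise claim $\chi_c(Z,\mathscr{G})=\int_Z\phi_{\mathscr{G}}\,d\chi_c$ and running the rest of the d\'evissage with $Y=\mathrm{pt}$, whereas the paper runs the induction over a general base $Y$ and only uses base change at the very end after already reducing to the constant sheaf. Both orders are fine and about equally economical.

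One step needs a fix. Your primary suggestion for the base case — a \emph{finite} \'etale cover $p\colon\tilde U\to U$ trivializing $\mathcal{L}$ — does not exist in general for local systems in the classical topology, since the monodromy representation $\pi_1(U)\to GL_m(k)$ need not have finite image (e.g., a rank-one $\C$-local system on $\C^*$ with monodromy $e^{2\pi i\alpha}$ for $\alpha$ irrational is not trivialized by any finite cover). Your alternative — triangulate $U$ finely enough that $\mathcal{L}$ is constant on each open cell and use additivity of $\chi_c$ over the locally closed decomposition — is correct and closes the gap. The paper instead takes a finite open cover of $U$ in the classical topology that trivializes $\mathcal{L}$ and runs the Mayer--Vietoris spectral sequence, which has the minor advantage of applying fiberwise over a general $Y$ (consistent with not having reduced to a point first), but is morally the same observation: the compactly supported Euler characteristic of a local system depends only on its rank.
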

\begin{proof}
First, let us prove this in the case of an open inclusion $U\overset{i}\hookrightarrow X$.  In this
case, the value at a point in $u$ is obviously unchanged.  At a point
in $X\setminus U$, the stalk of $f_!\phi_{\mathscr{F}}$ is $0$, so the
value of $\phi_{f_!\mathscr{F}}$ at the point is 0, as is true of
$f_!\phi_{\mathscr{F}}$. 

Let us prove this by induction on the dimension of the source variety $X$.
  Note that if we have a decomposition into open and closed subsets
  $U\overset{i}\hookrightarrow X \overset{j}\hookleftarrow V$, then we
  have a short exact sequence \[0\longrightarrow i_!i^*\mathscr{F}\longrightarrow \mathscr{F} \longrightarrow
  j_!j^*\mathscr{F}\to 0.\]
Thus, we have 
\[\phi_{f_!\mathscr{F}}-f_!\phi_{\mathscr{F}}=\phi_{f_!
  i_!i^*\mathscr{F}}+\phi_{f_!
  j_!j^*\mathscr{F}}-(fi)_!\phi_{i^*\mathscr{F}}-(fj)_!\phi_{j^*\mathscr{F}}.\]
Since $\dim V <\dim X$, we have $\phi_{f_!
  j_!j^*\mathscr{F}}=(fj)_!\phi_{j^*\mathscr{F}}$.  Thus, it suffices
to check the result after removing an arbitrary closed subvariety from
$X$.  Thus, we can assume that $\mathscr{F}$ is a local system
since this holds on an open subset.  Taking an open cover of $X$ (in
the {\it classical} topology) that
trivializes $\mathscr{F}$, and applying the Mayer-Vietoris spectral
sequence associated to this cover, we see that we get the same answer
for any local system, and thus can consider $\mathscr{F} =k_{X}$.  In this case, the stalk of
$R^if_!k_{X}$ at a point $y\in Y$ is given by $H_c^i(f^{-1}(y))$ by
base change\footnote{Note the importance of using compactly supported
  pushforward here, since the usual pushforward does not have this
  property for non-proper maps.} \cite[4.5.4]{SGA4andahalf}, so
indeed, the result follows from the equality $f_!1(y)=\chi_c(f^{-1}(y))$.
\end{proof}

\subsection{\'Etale cohomology}
\label{sec:arithmetic}

There is a more refined version of this theorem, which is {\it much}
more difficult to prove; it was the endpoint of 3 decades of
remarkable work in algebraic geometry.  

Computing the Euler characteristic of a complex algebraic variety is a
crude analogue of counting the number of points in an algebraic
variety over a finite field (it behaves a lot like the number of
points when $p=1$).  In particular, both quantities are invariant
under scissors congruence.  

Like Euler characteristic, the number of points in an
algebraic variety has a cohomological interpretation, which is again
an index formula.
Understanding this interpretation correctly requires a lot of
difficult technical details, but these are surprisingly easy to bypass
to understand the general framework.  Those looking for more details
should look first at the notes of Milne \cite{milneLEC}, which cover
all the basic ideas while remaining relatively accessible.  If still
more details are sought, the reader can turn to the earlier book of
Milne \cite{MilneEC} or that of Kiehl and Weissauer \cite{KW}, both in
English, or earlier French sources, such as \cite{SGA4andahalf}.

The first scary-sounding thing is the {\bf \'etale topology} on an
algebraic variety $X$ (see \cite[\S 2-7]{milneLEC} for general
discussion).  This is not a topology in the usual sense, but a
Grothendieck topology: its ``open subsets'' are given by maps
$\nu\colon U\to X$ (see, for example, \cite[\S 4]{milneLEC}).  You can
think of this as imposing a topology where certain maps are formally
locally invertible (even if there's no underlying map of spaces that
really achieves this).  The \'etale topology on a smooth manifold is
the topology where the open sets are manifolds $U$ equipped with a
smooth map $\nu\colon U\to X$ such that at each point $u\in U$, the
differential $T_u\nu\colon T_uU\to T_{\nu(u)}X$ is an isomorphism (so
this map is both an immersion and a submersion, a property we call
{\bf \'etale}).  You can easily work out that on a manifold, the
\'etale topology is equivalent to the usual one, by the inverse
function theorem\footnote{Two Grothendieck topologies are equivalent
  if for any ``neighborhood'' $\nu\colon U\to X$ in one topology,
  there is a neighborhood $\eta:V\to X$ in the other topology such
  that $\nu$ factors through $\eta$ (so $\eta$ ``contains'' $\nu$) and
  another $\eta':V'\to X$ such that $\eta'$ factors through $\nu$ (so
  $\nu$ ``contains'' $\eta'$).  An open subset in the usual topology
  includes via an \'etale map, and for any \'etale map, the inverse
  function theorem guarantees there's a neighborhood $V'$ of $u$ that
  maps diffeomorphically to an open subset of $X$.}.  Thus the \'etale
topology is essentially the topology where we declare {\it a priori}
that the inverse function theorem is true.  Note that while this is
not a ``real'' topology, we can still make sense of sheaves in this
topology, and define cohomology $H^*_{\text{\'et}}(X;\Lambda)$ in any
abelian group $\Lambda$, and compactly supported cohomology
$H^*_{\text{\'et},c}(X;\Lambda)$ using \v Cech
cohomology\footnote{It's more ``morally correct'' to define this
  cohomology using derived functors; however, for reasonable schemes,
  this is the same by \cite[10.2]{milneLEC}.}. 

For
manifolds, this yields nothing new, since the inverse function theorem
really is true.  However, we can apply the same trick in situations where it
is not, like the Zariski topology on an algebraic variety.    This
genuinely changes the topological behavior of this variety.  For
example, it's a well-known fact that the cohomology of a complex
algebraic variety in the Zariski topology is trivial, whereas if we
compute it in the \'etale topology with coefficients in a finite
abelian group $\Lambda$, then, by \cite[21.1]{milneLEC}, it coincides
with the usual Betti cohomology of the complex points (in the
classical topology):
\[H^*_{\text{\'et}}(X;\Lambda)\cong H^*(X(\mathbb{C};\Lambda)).\]
The fact that a finite group is required here is a minor nuisance; it
is mostly one of the technicalities I suggest the reader ignore, but
it does mean that typically, we work with coefficient groups and rings
which are built from finite ones.  Thus, by definition, we let 
\[H^*_{\text{\'et}}(X;\Z_\ell):=\varprojlim H^*_{\text{\'et}}(X;\Z/\ell^n\Z)\]
\[H^*_{\text{\'et}}(X;\Q_\ell):=
H^*_{\text{\'et}}(X;\Z_\ell)\otimes_{\Z_\ell}\Q_\ell.\]
for any prime $\ell$.  Unfortunately, these do not give the same result
as computing ``directly'' in the \'etale topology.  The same
comparison theorems to Betti cohomology exist for these groups by the
universal coefficient theorem.

There are other differences between the \'etale and Zariski topologies
which are relevant for local systems.  For
example, on $\C^*$, consider the local system defined by solutions of
$x\frac{df}{dx}=\frac{1}{2}f$.  Solutions to this are provided by the
branches of the square root function. Thus, it is very easy to find an
open subset in the classical topology where this local system is
trivialized by removing a single ray; however, there is no Zariski
open subset (that is, the complement of finitely many points) where
this local system is trivialized.  On the other hand, in the \'etale
topology, it is trivialized by the ``neighborhood'' $\C^*\overset
{x\mapsto x^2}\longrightarrow\C^*$.  

In the previous section, we defined the constructible sheaves using a
funny mix of the classical and Zariski topologies, essentially because
there aren't enough interesting local systems in the Zariski topology;
now knowing about the \'etale topology, we might prefer to consider
\'etale constructible sheaves\footnote{Note that certain classical
  local systems cannot be trivialized in the \'etale topology.
  Solutions to $x\frac{df}{dx}=\alpha f$ only will be if $\alpha$ is
  rational.  This is yet another complication it will probably not
  greatly benefit the reader to cogitate upon.}.  

The remarkable thing about the \'etale topology on a complex algebraic
variety is that it is a purely algebraic object: the tangent spaces
and differential 
can be rephrased in algebraic language, so we can speak of a map
between schemes being \'etale and thus define \'etale
neighborhoods and the \'etale topology for an arbitrary scheme.  Thus,
we can define cohomology groups $H^*_{\text{\'et}}(X;\Lambda)$ for an
arbitrary scheme.

This results in a second remarkable comparison theorem:  assume that
we have a scheme $X$ defined over $\Z$ (for example, a projective variety
defined by polynomials with integral coefficients)\footnote{This is
  really a much stronger hypothesis than we need.  With a bit more
  work, this theory can be made to work for any variety over the
  complex numbers.}.  We can consider
the base-change of this variety $\C$ or to the algebraic closure $\bar{\mathbb{F}}_p$
of any finite field of characteristic $p$.
\begin{theorem}[\mbox{\cite[20.5 \& 21.1]{milneLEC}}]
  If $\ell$ is any prime distinct from $p$, then we have
  \[H^*_{\text{\'et}}(X\otimes \bar{\mathbb{F}}_p;\Z_\ell)\cong
  H^*_{\text{\'et}}(X\otimes \C;\Z_\ell)\cong H^*(X(\C);\Z_\ell).\]
\end{theorem}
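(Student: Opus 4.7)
The plan is to combine two classical comparison results: Artin's comparison between \'etale and singular cohomology on a complex variety, and proper/smooth base change for the map from $X$, viewed as a scheme over $\Z$, to $\Spec \Z$.

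For the rightmost isomorphism, the statement already recalled from \cite[21.1]{milneLEC} gives $H^*_{\text{\'et}}(X \otimes \C; \Lambda) \cong H^*(X(\C); \Lambda)$ for any finite abelian group $\Lambda$. Applying this with $\Lambda = \Z/\ell^n\Z$ and taking the inverse limit over $n$ yields the desired isomorphism with $\Z_\ell$ coefficients, essentially by definition of $\Z_\ell$-cohomology:
\[ H^*_{\text{\'et}}(X\otimes \C;\Z_\ell) = \varprojlim_n H^*_{\text{\'et}}(X\otimes \C;\Z/\ell^n\Z). \]
One should check that the Mittag--Leffler condition holds so that $\varprojlim^1$ vanishes and the limit commutes with cohomology on the Betti side; this is routine given that $X$ is projective, so the $H^i(X(\C);\Z)$ are finitely generated.

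For the leftmost isomorphism, I would invoke the smooth and proper base change theorems. After inverting finitely many ``bad'' primes, $X$ extends to a smooth projective morphism $f\colon \mathscr{X} \to U$, where $U \subseteq \Spec \Z[1/\ell]$ is a suitable open subscheme. Proper base change gives $(R^if_* \Z/\ell^n\Z)_{\bar s} \cong H^i_{\text{\'et}}(\mathscr{X}_{\bar s}; \Z/\ell^n\Z)$ for any geometric point $\bar s$, and smooth-plus-proper base change further ensures that $R^if_* \Z/\ell^n\Z$ is a locally constant constructible sheaf on $U$. Since $\Spec \Z[1/\ell]$ is connected in the \'etale topology, its stalks at any two geometric points are (non-canonically) isomorphic. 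Both $\bar s = \Spec \C$, sitting over the generic point, and $\bar s = \Spec \bar{\mathbb{F}}_p$, for $p\neq \ell$ a prime of good reduction, are geometric points of $U$, so the corresponding \'etale cohomology groups agree. Taking the inverse limit over $n$ gives the statement with $\Z_\ell$-coefficients.

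The hard part is of course the proper and smooth base change theorems themselves, which are the technical engines from SGA 4 and SGA 4.5 that I would simply cite as black boxes; they are the entire content of the ``three decades of remarkable work'' alluded to just before the theorem. A secondary wrinkle is exchanging $\varprojlim_n$ with cohomology in the definition of $\Z_\ell$-cohomology, which is standard but does require finite generation of the integral cohomology. A final point is identifying $X \otimes \bar{\Q}$ (the natural geometric generic fiber of $\mathscr{X} \to \Spec \Z$) with $X \otimes \C$ on the level of \'etale cohomology; this rests on the invariance of \'etale cohomology with torsion coefficients under extension of algebraically closed fields, itself another consequence of smooth base change. Modulo these well-known results, the two isomorphisms follow cleanly.
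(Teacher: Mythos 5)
The paper does not actually prove this statement; it is cited as a black box to Milne \cite[20.5 \& 21.1]{milneLEC}. Your outline is correct and is exactly the standard argument found there: Artin's comparison theorem between \'etale and singular cohomology of a complex variety (with finite coefficients, then passing to $\Z_\ell$ by an inverse limit), combined with the smooth and proper base change theorems applied to a smooth projective model $\mathscr{X}\to U$ over an open $U\subseteq\Spec\Z[1/\ell]$, plus the auxiliary fact that \'etale cohomology with torsion coefficients is invariant under extension of algebraically closed base fields. You also correctly flag a point the paper's informal statement glosses over: $p$ must be a prime of good reduction (i.e.\ lie in $U$) for the left-hand isomorphism to hold, which is consistent with the paper's footnote acknowledging that ``defined over $\Z$'' is not quite the right hypothesis. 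Since the paper offers no proof of its own, there is nothing to compare routes against; your proposal correctly and completely fills in the citation.
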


Since the former group is defined purely using the characteristic $p$
geometry of $X\otimes \bar{\mathbb{F}}_p$ (that is, the solutions to
our polynomials over finite fields) and the latter purely using the
topology of the complex solutions, this is a pretty remarkable
theorem.  

\subsection{The Grothendieck trace formula}
\label{sec:groth-trace-form}

However, one might wonder what purpose it serves in relation to
categorification.  These results about cohomology are in fact proven
in a categorical context.  We can consider \'etale local systems and
constructible sheaves not
just on complex algebraic varieties, but on any scheme, in particular
one of characteristic $p$.  Just like on complex algebraic varieties,
these sheaves are endowed with pushforward and pullback functors.  

But rather than just considering Euler characteristic of stalks, we have a much
richer invariant, which incorporates the action of the Frobenius.  The
Frobenius of interest to us is the relative Frobenius
$\operatorname{Fr}\colon X\otimes \bar{\mathbb{F}}_p\to X\otimes
\bar{\mathbb{F}}_p$ which is induced by raising functions on $X\otimes
\mathbb{F}_p$ to the $p$th power.  For a projective variety, this is
the map of raising the projective coordinates to the $p$th power; note
that this is an automorphism of the variety since the polynomials have
integer coefficients.  For any constructible sheaf $\mathcal{F}$ on $X\otimes
\mathbb{F}_p$, we have a canonical isomorphism $\mathcal{F}\cong
\operatorname{Fr}_*\mathcal{F}$.  This means that if $x\in X$ is an
$\mathbb{F}_p$-rational point (i.e. one whose coordinates lie in
$\mathbb{F}_p$), then, we have an induced Frobenius map
$\operatorname{Fr}\colon \mathcal{F}_x\to \mathcal{F}_x$.  If we have
that $\mathcal{F}$
is a complex of sheaves with $\Q_\ell$-constructible cohomology defined on  $X\otimes
\mathbb{F}_p$, then we get an action on the stalks of the cohomology
on this point.  This action respects the differentials of the long exact sequence on
cohomology, so we find that:
\begin{proposition}
  The function \[\Phi(\mathcal{F})(x)=\sum_{i\in
    \Z}(-1)^i\operatorname{tr}(\operatorname{Fr}\mid
  \mathcal{H}^i(\mathscr{F}_x))\] defines a map from the Grothendieck
  group $K(\mathsf{Sh}(X\otimes \mathbb{F}_p))$ to $\Q_\ell$-valued functions on $X(\mathbb{F}_p)$.
\end{proposition}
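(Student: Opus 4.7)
The plan has two parts: first, verify that $\Phi(\mathcal{F})(x)$ is a well-defined element of $\Q_\ell$ for each $x\in X(\mathbb{F}_p)$, and second, show that $\Phi$ descends through the relation \eqref{eq:ses} on the Grothendieck group. The main work is in the second step, and ultimately reduces to a standard fact about traces in long exact sequences.

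For well-definedness of the function, I would first note that since $\mathcal{F}$ is a bounded complex with $\Q_\ell$-constructible cohomology, each stalk cohomology $\mathcal{H}^i(\mathscr{F}_x)$ is a finite-dimensional $\Q_\ell$-vector space, and it vanishes for all but finitely many $i$. As explained in the paragraph preceding the proposition, the canonical isomorphism $\mathcal{F} \cong \operatorname{Fr}_*\mathcal{F}$ and the fact that $x$ is $\mathbb{F}_p$-rational give a well-defined Frobenius endomorphism of each $\mathcal{H}^i(\mathscr{F}_x)$, so each trace $\operatorname{tr}(\operatorname{Fr} \mid \mathcal{H}^i(\mathscr{F}_x))$ is defined, and the alternating sum is finite. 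Constructibility of the resulting function on $X(\mathbb{F}_p)$ is automatic but not actually required by the statement.

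For compatibility with the Grothendieck group relation, suppose we have a short exact sequence
\[0\longrightarrow \mathcal{E} \longrightarrow \mathcal{F}\longrightarrow \mathcal{G}\longrightarrow 0\]
of complexes of $\Q_\ell$-constructible sheaves on $X\otimes \mathbb{F}_p$. Taking stalks at $x$ is exact, and passing to cohomology yields a long exact sequence
\[\cdots \longrightarrow \mathcal{H}^{i}(\mathcal{E}_x)\longrightarrow \mathcal{H}^{i}(\mathcal{F}_x)\longrightarrow \mathcal{H}^{i}(\mathcal{G}_x)\longrightarrow \mathcal{H}^{i+1}(\mathcal{E}_x)\longrightarrow \cdots\]
of finite-dimensional $\Q_\ell$-vector spaces. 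Because the maps in the short exact sequence are morphisms of sheaves over $\mathbb{F}_p$, they commute with Frobenius, and the connecting homomorphisms — being canonical — do so as well. Thus every arrow in the long exact sequence intertwines the Frobenius action.

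The conclusion then follows from the elementary lemma that, for a long exact sequence of finite-dimensional vector spaces equipped with compatible endomorphisms $T$, the alternating sum $\sum (-1)^i \operatorname{tr}(T\mid V^i)$ vanishes. This in turn is proved by splitting the long exact sequence at its images into short exact sequences $0 \to A \to B \to C \to 0$ and using additivity of trace in a short exact sequence (choose a basis of $A$ and extend to $B$; the matrix of $T$ is block upper-triangular with diagonal blocks $T|_A$ and $T|_C$). Applying this to the long exact sequence above gives
\[\Phi(\mathcal{F})(x) = \Phi(\mathcal{E})(x) + \Phi(\mathcal{G})(x),\]
which is exactly the relation \eqref{eq:ses2} needed to factor $\Phi$ through the Grothendieck group. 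The only conceptual subtlety — and the point most worth stating carefully — is the compatibility of the connecting homomorphism with Frobenius; everything else is bookkeeping.
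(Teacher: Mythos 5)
Your proof is correct and takes essentially the same approach as the paper, which dispatches this proposition in a single sentence preceding the statement by observing that the Frobenius action respects the differentials of the long exact sequence on cohomology. You merely spell out the details the paper leaves implicit: finite-dimensionality and boundedness of the stalk cohomology, exactness of taking stalks, Frobenius-equivariance of the connecting maps, and the standard alternating-trace lemma for long exact sequences obtained by splitting at the images.
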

More generally, we can consider the $n$th power of the Frobenius map,
and define
\[\Phi^{(n)}(\mathcal{F})(x)=\sum_{i\in
    \Z}(-1)^i\operatorname{tr}(\operatorname{Fr}^n\mid
  \mathcal{H}^i(\mathscr{F}_x))\]
for $x\in X(\mathbb{F}_{p^n})$.  Compared to just taking Euler
characteristic, this map is much more powerful. 
\begin{theorem}\label{th:injective}
  The map from the Grothendieck group to $\prod_{n=1}^\infty
  \Q_\ell[X(\mathbb{F}_{p^n})]$ defined by $\prod_{n=1}^\infty\Phi^{(n)}$ is injective.
\end{theorem}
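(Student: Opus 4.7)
The plan is to reduce, by induction on the dimension of the support, to the case of a lisse $\ell$-adic sheaf on a smooth geometrically connected variety, and then to apply the Chebotarev density theorem to recover the full character of the associated $\ell$-adic representation of the \'etale fundamental group.

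Suppose $\mathcal{F}$ lies in the kernel of $\prod_n\Phi^{(n)}$. Since the class of a bounded constructible complex equals the alternating sum of the classes of its cohomology sheaves, and $\Phi^{(n)}$ is defined compatibly with this, we may assume $\mathcal{F}$ is a single sheaf in degree $0$. We then induct on $d:=\dim\operatorname{supp}\mathcal{F}$. Choose a dense open $j\colon U\hookrightarrow\operatorname{supp}\mathcal{F}$, defined over $\mathbb{F}_p$, on which $\mathcal{F}|_U$ is lisse, with closed complement $i\colon V\hookrightarrow\operatorname{supp}\mathcal{F}$. The short exact sequence $0\to j_!j^*\mathcal{F}\to\mathcal{F}\to i_*i^*\mathcal{F}\to 0$ gives $[\mathcal{F}]=[j_!j^*\mathcal{F}]+[i_*i^*\mathcal{F}]$. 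Since $j_!j^*\mathcal{F}$ has zero stalks on $V$, the function $\Phi^{(n)}(j_!j^*\mathcal{F})$ agrees with $\Phi^{(n)}(\mathcal{F})$ on $U(\mathbb{F}_{p^n})$ and vanishes on $V(\mathbb{F}_{p^n})$; consequently $\Phi^{(n)}(i_*i^*\mathcal{F})$ also lies in the kernel for every $n$. Because $\dim V<d$, the inductive hypothesis disposes of $[i_*i^*\mathcal{F}]$, and we are reduced to $\mathcal{F}=j_!\mathcal{L}$ for a lisse $\Q_\ell$-sheaf $\mathcal{L}$ on a smooth geometrically connected $U/\mathbb{F}_p$.

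Fixing a geometric basepoint, $\mathcal{L}$ corresponds to a continuous $\ell$-adic representation $\rho\colon \pi_1^{\text{\'et}}(U)\to \operatorname{GL}(W)$. Each closed point $x\in U$ of degree $n$ supplies a Frobenius conjugacy class $\operatorname{Fr}_x\in\pi_1^{\text{\'et}}(U)$, and the hypothesis reads $\operatorname{tr}\rho(\operatorname{Fr}_x^k)=0$ for every closed point $x$ and every positive multiple $k$ of $\deg(x)$. The main, and only essentially nontrivial, step is the Chebotarev density theorem for arithmetic fundamental groups of varieties over finite fields, which asserts that the union of these Frobenius conjugacy classes is dense in $\pi_1^{\text{\'et}}(U)$. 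Continuity of $\operatorname{tr}\rho$ then forces $\operatorname{tr}\rho\equiv 0$.

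Finally, a semisimple continuous $\ell$-adic representation is determined up to isomorphism by its character, so $\rho^{\mathrm{ss}}=0$, i.e.\ $\mathcal{L}^{\mathrm{ss}}=0$. Since the class of a sheaf in the Grothendieck group of an abelian category depends only on its semisimplification, $[\mathcal{L}]=0$, whence $[\mathcal{F}]=0$. The hard part throughout is the Chebotarev density input; the remaining steps are standard manipulations with Grothendieck-group relations, stratifications, and the dictionary between lisse sheaves and representations of $\pi_1^{\text{\'et}}$.
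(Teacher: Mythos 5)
The paper states this theorem without proof, treating it as a known result (it is due essentially to Laumon, via the Chebotarev density theorem for function fields), so there is no in-paper argument to compare against. Evaluated on its own merits, your proof captures the correct and standard strategy: stratify, reduce to lisse sheaves, convert to $\ell$-adic representations of the arithmetic $\pi_1$, and invoke Chebotarev density to show the character vanishes identically, then use the fact that a semisimple $\ell$-adic representation is determined by its character.

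There is, however, a small but genuine gap in the reduction step. You write ``we may assume $\mathcal{F}$ is a single sheaf in degree $0$,'' but an arbitrary element of the Grothendieck group is a formal $\Z$-linear combination $\sum n_i[\mathcal{F}_i]$, not the class of a single object; the relation $[\mathcal{F}^\bullet]=\sum(-1)^i[\mathcal{H}^i(\mathcal{F}^\bullet)]$ reduces from complexes to sheaves, but it does not reduce from virtual sums to honest sheaves. Indeed, for a single nonzero constructible sheaf $\mathcal{F}$ one automatically has $[\mathcal{F}]\neq 0$ (consider Jordan--H\"older multiplicities), so the statement you would be proving in that reduced case is just $\mathcal{F}=0$, which is not the content of the theorem. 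The fix is routine: start from a virtual class $\xi=\sum n_i[\mathcal{F}_i]$, choose a single stratification of $X$ defined over $\mathbb{F}_p$ on which every $\mathcal{F}_i$ is lisse, run the induction on the dimension of the strata, and on each stratum compare the two semisimple representations built from the positive and negative parts of $\xi$; Chebotarev then gives equality of characters, hence isomorphism of semisimplifications, hence equality of classes. Once you make that adjustment the argument is complete. A secondary point worth a remark but not a gap: one should also note that the dense open $U$ may not be geometrically connected, but the Chebotarev argument for the arithmetic fundamental group applies regardless, so nothing is lost.
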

These functions also satisfy a trace formula, remarkably (or maybe not
so remarkably?) also due in large part to Grothendieck.  Compatibly
with our notation before, if $f\colon S\to T$ is a map of finite sets,
and $\Phi\colon S\to k$ is a map to any ring $k$ (or more generally
abelian group), then $f_!\Phi(t)=\sum_{s\in f^{-1}(t)}\Phi(s)$.  
\begin{theorem}\label{th:Grothendieck}
  For any map between $\mathbb{F}_p$-schemes $f\colon X\to Y$, we have
  $\Phi^{(n)}(f_!\mathcal{F})=f_!\Phi^{(n)}(\mathcal{F})$, where on
  the RHS, we use $f$ to denote the induced map
  $X(\mathbb{F}_{p^n})\to Y(\mathbb{F}_{p^n})$.  In particular, if
  $f\colon X\to \Spec \mathbb{F}_p$, and $\mathcal{F}=\Q_\ell$, then
  we find that 
\[\#X(\mathbb{F}_{p^n})=\sum_{i=0}^\infty(-1)^i\operatorname{tr}(\operatorname{Fr}^n\mid
  {H}^i _{\text{\'et},c} (X;\Q_{\ell})).\]
\end{theorem}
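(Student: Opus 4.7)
The plan is to imitate the proof given above for the constructible-function pushforward, with Euler characteristic of fibers replaced by traces of $\operatorname{Fr}^n$, and the two technical inputs (base change and the fiber computation) upgraded to their \'etale counterparts. First I would verify the identity pointwise: for $y\in Y(\mathbb{F}_{p^n})$, proper base change for $f_!$ in the \'etale topology provides a canonical Frobenius-equivariant isomorphism $(\mathbf{R}^i f_!\mathcal{F})_{\bar y} \cong H^i_c(X_{\bar y};\mathcal{F}|_{X_{\bar y}})$, where $X_{\bar y}=f^{-1}(\bar y)$. Taking alternating sums of traces, the stalk side $\Phi^{(n)}(f_!\mathcal{F})(y)$ becomes $\sum_i(-1)^i\operatorname{tr}(\operatorname{Fr}^n\mid H^i_c(X_{\bar y};\mathcal{F}|_{X_{\bar y}}))$, while the putative RHS is $\sum_{x\in X_{\bar y}(\mathbb{F}_{p^n})}\Phi^{(n)}(\mathcal{F})(x)$. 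The general statement thereby reduces to the absolute case in which $Y=\Spec\mathbb{F}_{p^n}$ and $X$ is replaced by the fiber.

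For this absolute statement I would argue by induction on $\dim Z$, where $Z$ now plays the role of the fiber. Given a decomposition $j\colon U\hookrightarrow Z\hookleftarrow V \colon i$ into an open and its closed complement, the excision triangle $j_!j^*\mathcal{G}\to \mathcal{G}\to i_*i^*\mathcal{G}\to$ induces a long exact sequence on compactly supported cohomology, so the trace side is additive; and $Z(\mathbb{F}_{p^n})=U(\mathbb{F}_{p^n})\sqcup V(\mathbb{F}_{p^n})$, so the point-count side is additive too. By the inductive hypothesis, the formula holds on the lower-dimensional $V$, so I only need to check it after deleting an arbitrary closed subvariety from $Z$. This allows me to assume $Z$ is smooth and $\mathcal{G}$ is a shifted lisse sheaf, which after an \'etale base change and another devissage reduces to the case of constant coefficients on a smooth variety.

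The remaining input is the Grothendieck--Lefschetz trace formula for Frobenius: $\operatorname{Fr}^n$ acts on $Z\otimes\bar{\mathbb{F}}_{p^n}$ with all fixed points simple, because $\operatorname{Fr}$ is purely inseparable and so $1-d\operatorname{Fr}^n$ acts as the identity on every tangent space, whence the local intersection number of the graph $\Gamma_{\operatorname{Fr}^n}$ with the diagonal at each fixed point equals $1$. Combining this with the Lefschetz--Verdier trace formula for constructible $\Q_\ell$-complexes yields the desired equality; specializing to $Y=\Spec\mathbb{F}_p$ and $\mathcal{F}=\Q_\ell$ gives $\#X(\mathbb{F}_{p^n})=\sum_i(-1)^i\operatorname{tr}(\operatorname{Fr}^n\mid H^i_{\text{\'et},c}(X;\Q_\ell))$. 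The main obstacle is really external to this sketch: both proper base change and the Lefschetz--Verdier trace formula for Frobenius are among the deepest results of \'etale cohomology, and the devissage above is routine only once those are in hand. I would cite \cite{SGA4andahalf} or \cite{milneLEC} rather than attempt a reproof.
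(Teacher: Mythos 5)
The paper itself gives no proof of this theorem; immediately after the statement it offers only the one-sentence gloss that ``this theorem is thoroughly topological in nature: it's simply the Lefschetz fixed point theorem applied to the Frobenius.'' Your sketch is a correct and careful unwinding of exactly that assertion, and the overall shape is right: proper base change for $f_!$ collapses the relative statement to a fiberwise one, and the fiberwise (absolute) statement is the Grothendieck--Lefschetz trace formula, whose proof in turn rests on the observation that $d\operatorname{Fr}=0$ forces the local term at each fixed point to be the naive trace on the stalk. Two small remarks. First, once you are willing to cite the Lefschetz--Verdier/Grothendieck trace formula for arbitrary constructible $\Q_\ell$-complexes (as you say you would), the inductive devissage down to constant coefficients on a smooth variety is no longer doing any work --- it would be needed only if you wanted to bootstrap from the bare smooth constant-coefficient case. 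Second, for the relative statement you only need the first, easy reduction: base change plus the absolute formula applied to each fiber already gives $\Phi^{(n)}(f_!\mathcal{F})(y)=\sum_{x\in X_y(\mathbb{F}_{p^n})}\Phi^{(n)}(\mathcal{F})(x)=f_!\Phi^{(n)}(\mathcal{F})(y)$. So your argument is correct but a little over-engineered; the paper, by contrast, under-engineers it, leaving all of this implicit.
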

While it might seem strange, this theorem is thoroughly topological in
nature: it's simply the Lefschetz fixed point theorem applied to the
Frobenius.  

Note, this means that the eigenvalues of Frobenius have a powerful
effect on the number of points in $X(\mathbb{F}_{p^n})$ as we change
$n$.  For example, Wiles's proof of Fermat's last theorem proceeded by
showing that a counter-example would lead to the existence of an
elliptic curve whose Frobenius eigenvalues are too strange to actually
exist.

\subsection{Grassmannians and $\mathfrak{sl}_2$}
\label{sec:grassm-mathfr}

\nc{\Gr}{\mathsf{Gr}}

Now, let's actually apply these theorems a bit.  One very interesting
and relevant example is given by the system of Grassmannians and
partial flag varieties.  Let $\Gr(r,n)$ be the Grassmannian defined
over $\Z$; you can either think of this as the projective variety
defined by the Plucker relations (which have integer coefficients) or
as the variety whose functor of points sends a ring $k$ to the set of
module quotients $k^n\to V$ such that $V\cong k^{n-r}$.  If $k$ is a field,
this is the collection of $r$-dimensional subspaces in $k^n$.  

Given $r<r'\leq n$, we have a partial flag variety $\Gr(r\subset
r',n)$, given by pairs of subspaces with one inside the other.  These
have their own Pl\"ucker relations, also defined over $\Z$, and
natural maps
\[\Gr(r,n)\overset{\pi_{r,r'}}\longleftarrow \Gr(r\subset
r',n) \overset{\pi_{r',r}}\longrightarrow \Gr(r',n).\]

There are functors 
\[\mathcal{E}=(\pi_{r,r+1})_*\pi_{r+1,r}^*[r]\qquad \mathcal{F}=(\pi_{r,r-1})_*\pi_{r-1,r}^*[n-r]\]
relating the categories $\mathcal{C}_r:=\Con(\Gr(r,n))$ for all
$0\leq r\leq n$.  The brackets indicate homological shift in the
derived category, but also require changing the action of the
Frobenius in order to keep the mixed structure pure of weight 0 by a
factor of $p^{-r/2}$; the square of this operation is called ``Tate twist.''  The
overall effect is that $\Phi^{(m)}(\mathcal{G}[r])=p^{-rm/2}\Phi^{(m)}(\mathcal{G})$.

 These functors are biadjoint up to shift since
$\pi_{r',r} $ is smooth and proper.  We can understand the action of these functors using the
index formulas we've defined (Theorem \ref{th:Grothendieck}).  For two
subspaces $V,V'$ in a larger vector space, we 
write $V\overset{1}\subset V'$ if $V\subset V'$ and $\dim(V'/V)=1$.  Let $q=p^m$. 
\begin{proposition} For any constructible function $G$ on $\Gr(r,n)$
  and $V\in \Gr(r,n)$, we have 
  \[\displaystyle \Phi^{(m)}(\mathcal{E}G)(V)=\sum_{V\overset{1}\subset V'}q^{-r/2}\Phi^{(m)}(G)(V')\]
  \[\displaystyle \Phi^{(m)}(\mathcal{F}G)(V)=\sum_{V'\overset{1}\subset V}q^{(r-n)/2}\Phi^{(m)}(G)(V')\]
\end{proposition}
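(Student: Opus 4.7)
The plan is to unpack the definition $\mathcal{E} = (\pi_{r, r+1})_* \pi_{r+1, r}^*[r]$ and apply the Grothendieck trace formula (Theorem \ref{th:Grothendieck}) one constituent at a time. Pullback commutes with passage to stalks, so $\Phi^{(m)}(\pi_{r+1, r}^* G)$ is simply the pullback of $\Phi^{(m)}(G)$ along $\pi_{r+1, r}$ on the level of $\mathbb{F}_q$-points. The cohomological shift $[r]$ contributes the scalar $q^{-r/2}$ via the Tate-twist identity $\Phi^{(m)}(\mathcal{H}[r]) = q^{-r/2} \Phi^{(m)}(\mathcal{H})$ recorded just before the proposition. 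Finally, since $\pi_{r, r+1}$ is projective and hence proper, $(\pi_{r, r+1})_*$ coincides with $(\pi_{r, r+1})_!$, so Theorem \ref{th:Grothendieck} converts this pushforward into summation over $\mathbb{F}_q$-points of the fiber.

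The only remaining input is the fiber description. For $V \in \Gr(r, n)(\mathbb{F}_q)$, the fiber of $\pi_{r, r+1}$ over $V$ is the projective space $\mathbb{P}(\mathbb{F}_q^n / V) \cong \mathbb{P}^{n-r-1}$, whose $\mathbb{F}_q$-points are in bijection with the subspaces $V' \in \Gr(r+1, n)(\mathbb{F}_q)$ satisfying $V \overset{1}{\subset} V'$. Combining the three ingredients above yields
\[\Phi^{(m)}(\mathcal{E} G)(V) = q^{-r/2} \sum_{V \overset{1}{\subset} V'} \Phi^{(m)}(G)(V'),\]
which is the first identity.

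The argument for $\mathcal{F} = (\pi_{r, r-1})_* \pi_{r-1, r}^* [n-r]$ is entirely parallel: the fiber of $\pi_{r, r-1}$ over $V \in \Gr(r, n)(\mathbb{F}_q)$ is $\mathbb{P}(V^*) \cong \mathbb{P}^{r-1}$, whose $\mathbb{F}_q$-points are exactly the $V' \in \Gr(r-1, n)(\mathbb{F}_q)$ with $V' \overset{1}{\subset} V$, and the shift $[n-r]$ contributes $q^{-(n-r)/2} = q^{(r-n)/2}$. There is no substantive obstacle here; the only subtle point is the bookkeeping of the Tate twist packaged inside the homological shifts, whose precise normalization is what ensures the Frobenius weights line up so that $\mathcal{E}$ and $\mathcal{F}$ categorify the standard $\mathfrak{sl}_2$-action on the weight-space decomposition of $V^{\otimes n}$.
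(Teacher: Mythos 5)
Your argument is correct and is exactly the unpacking the paper intends but leaves unstated: the proposition is asserted immediately after Theorem~\ref{th:Grothendieck} with the remark that it follows from that index formula, and no proof is supplied. You correctly decompose $\mathcal{E}$ into pullback (which commutes with $\Phi^{(m)}$ stalkwise), a shift--twist (contributing $q^{-r/2}$ by the formula recorded just above the proposition), and a proper pushforward (converted to a fiberwise sum by Theorem~\ref{th:Grothendieck}, using that $\pi_{r,r+1}$ is projective so $\pi_* = \pi_!$), and you identify the relevant fibers as $\mathbb{P}(\mathbb{F}_q^n/V)$ and $\mathbb{P}(V^*)$. This matches what the surrounding text (the $\mathcal{E}\mathcal{F}-\mathcal{F}\mathcal{E}$ computation immediately after) relies on.
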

Now, consider the difference
\begin{align*}
\Phi^{(m)}(\mathcal{E}\mathcal{F}G)(V)-\Phi^{(m)}(\mathcal{F}\mathcal{E}G)(V)&=\sum_{V\overset{1}\subset
  V'}q^{(-n+1)/2}\Phi^{(m)}(G)(V)-\sum_{V'\overset{1}\subset
  V}q^{(-n+1)/2}\Phi^{(m)}(G)(V)\\ &=q^{\nicefrac{(-n+1)}2}(1+\cdots
+q^{n-r-1}-1-\cdots -q^{r-1})\Phi^{(m)}(G)(V) \\ &=q^{\nicefrac{(-n+1)}2}\frac{q^{n-r}-q^r}{q-1}\Phi^{(m)}(G)(V) \\ &=\frac{q^{\nicefrac{(n-2r)}{2}}-q^{\nicefrac{(2r-n)}{2}}}{q^{\nicefrac 12}-q^{-\nicefrac 12}}\Phi^{(m)}(G)(V)
\end{align*}

By the joint injectivity of the maps $\Phi^{(m)}$, this implies that 
\begin{align}\label{EFp}
\mathcal{E}\mathcal{F}G&\cong
\mathcal{F}\mathcal{E}G\oplus G[n-2r-1]\oplus
\cdots \oplus G[2r-n+1]  &(n-2r\geq 0)\\
\label{EFm}
\mathcal{E}\mathcal{F}G&\oplus G[2r-n-1]\oplus
\cdots \oplus G[n-2r+1]\cong
\mathcal{F}\mathcal{E}G &(n-2r\leq 0)
\end{align}

This is a categorified version of the $U_q(\mathfrak{sl}_2)$ relation
\[EF-FE=\frac{K-K^{-1}}{q-q^{-1}}.\]
In fact the functors $\mathcal{E}$ and $\mathcal{F}$ define a
categorical action of $\mathfrak{sl}_2$, in the sense of Chuang and
Rouquier.
\begin{definition}
  A collection of categories $\mathcal{C}_n$ and functors 
\[\mathcal{E}
  \colon \mathcal{C}_n \to \mathcal{C}_{n+2} \qquad \mathcal{F}
  \colon \mathcal{C}_n \to \mathcal{C}_{n-2}\] form a categorical
  $\mathfrak{sl}_2$-action if
  \begin{enumerate}
  \item the functors $\mathcal{E}$ and $\mathcal{F}$ are biadjoint, and
  \item satisfy the relations (\ref{EFp}--\ref{EFm}) up to
    isomorphism, and 
  \item there is an action of the nilHecke algebra on the $m$-fold composition
    $\mathcal{E}^m$.
  \end{enumerate}
\end{definition}

The first 2 of these properties have geometric interpretations we've
discussed: they arise from the properness and smoothness of the maps,
and from the point counting above.  The third also has a geometric
interpretation.  The functor $\mathcal{E}^m$ is a push-pull functor
for the correspondence \[\Gr(r\subset r+1\subset \cdots \subset r+m,n)=\{V_r\overset{1}\subset
V_{r+1}\overset{1}\subset\cdots \overset{1}\subset V_{r+m}\subset
k^n\}\] over $\Gr(r,n)$ and $\Gr(r+m,n)$.  This fits into the diagram:
\[\tikz[->,very thick]{
\matrix[row sep=12mm,column sep=20mm,ampersand replacement=\&]{
\&\node (a) {$\Gr(r\subset r+1\subset \cdots \subset r+m,n)$}; \&;\\
\node (b) {$\Gr(r,n)$}; \& \node (c) {$\Gr(r\subset r+m,n)$};\&\node (d) {$\Gr(r+m,n)$};\\
};
\draw (c) -- node[above,midway]{$\pi_{r,r+m}$}(b);
\draw (a) -- node[right,midway]{$q$} (c);
\draw (c)-- node[above,midway]{$\pi_{r+m,r}$} (d);
}\]
The vertical map $q$ is a fiber bundle with fiber given by the
complete flag variety $\mathsf{Fl}(m)$ on an $m$-dimensional space; the functor
$\mathcal{E}^m$ can be rewritten as convolution with the pushforward
of the constant sheaf on $\Gr(r\subset r+1\subset \cdots \subset r+m,n)$, which is isomorphic to
$H^*(\mathsf{Fl}(m))$ tensored with the constant sheaf on
$\Gr(r\subset r+m,n)$.  Thus, the action of the nilHecke algebra on
this functor is inherited from its action on the cohomology of the
flag variety.  This is a more sheafy interpretation of the results of
\cite{Laucq,LauSL2} which phrases the same action in terms of the
cohomology of Grassmannians and these correspondence, which arise when
we take hypercohomology of the sheaves discussed above.

\begin{remark}
  This action can be extended to other partial flag varieties,
  obtaining a categorification of $\mathfrak{sl}_n$ when we consider
  $n$-step flags.  This corresponds to the calculations of Khovanov
  and Lauda in \cite{KLIII}.  This action is discussed in greater
  detail in \cite{Webcomparison}.  
\end{remark}

\subsection{Flag varieties}
\label{sec:flag-varieties-1}

The structure of constructible sheaves on flag varieties is a deep and
beautiful subject.  To stay within the bounds on this paper, we will
mainly concentrate on the relationship to the Hecke algebra.  Consider
a Coxeter group $W$ generated by the set $S$ with the relations
\[s^2=e\qquad (st)^{m(s,t)}=e\]
\begin{definition}
  Let $\mathcal{H}_v(W)$ be the algebra over $\Z[v,v^{-1}]$ generated
  by $T_s$ for $s\in S$ with the relations:
\[ (T_s+1)(T_s-v)=0 \qquad T_sT_t\cdots =T_tT_s\cdots.\]
where the latter two products both have $m(s,t)$ terms.
\end{definition}

On the other hand, we can consider the $\Fq$-points of a split simple algebraic group $G$ over
$\Fq$ (for example, $PGL_n(\Fq),Sp_{2n}(\Fq), SO_m(\Fq), \cdots$), and
let $B$ be the $\Fq$-points of a Borel.  This is just the group
elements which preserve an appropriate flag (which must be self-dual for symplectic
or orthogonal groups).  We have a Bruhat decomposition which gives a
bijection between the Weyl group $W$ and the double cosets $BwB$ for
$B$ in $G$.  The Weyl group of $G$ is a Coxeter group (for example,
for $PGL_n(\Fq)$, it is the symmetric group $S_n$).  

The set of functions on the double cosets $B\backslash G/B$ has a
natural multiplication:  let 
\[f\star g(BwB)=\sum_{\substack{Bw'\in B\backslash G\\w''B\in G/B
    \\BwB=Bw'w''B}}f(Bw'B)g(Bw''B).\]
This arises naturally when we identify $\K[B\backslash G/B]$ with
$\End_G(\K[G/B])$, acting by the same formula.  We can also identify
$B\backslash G/B$ with the set of $G$-orbits on $G/B\times G/B$ via
the map $(g_1B,g_2B)\mapsto Bg_1^{-1}g_2B$.  In this realization, we
can write this multiplication as
\[f\star g =(\pi_{13})_!(\pi_{12}^*f\cdot \pi_{23}^*g)\] where
$\pi_{ij}\colon (G/B)^3\to (G/B)^2$ is the map forgetting the factor
which is not listed.

\begin{theorem}[\cite{Iwahori}]
  The set of functions on $B\backslash G/B$ is isomorphic to the
  specialization $H_q(W)$ at $v=q$.  
\end{theorem}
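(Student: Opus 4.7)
The plan is to set up an algebra isomorphism by sending the Hecke generator $T_w$ to the characteristic function $\mathbf{1}_{BwB}$ of the corresponding Bruhat cell (up to a suitable normalization absorbed into the convolution). By the Bruhat decomposition $G = \sqcup_{w \in W} BwB$, the $\{\mathbf{1}_{BwB}\}_{w \in W}$ form a $\K$-basis of $\K[B\backslash G/B]$, matching the standard basis $\{T_w\}_{w \in W}$ of $\mathcal{H}_v(W)|_{v=q}$, so the proposed map is automatically a linear bijection; the content of the theorem is that it respects multiplication.

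Since $\mathcal{H}_v(W)$ is presented by the generators $\{T_s\}_{s \in S}$ with the quadratic and braid relations, it suffices to establish two structural convolution formulas: for each simple reflection $s$ and each $w \in W$,
\[
T_s \star T_w = \begin{cases} T_{sw} & \text{if } \ell(sw) = \ell(w) + 1, \\ q\,T_{sw} + (q-1)\,T_w & \text{if } \ell(sw) = \ell(w) - 1. \end{cases}
\]
Given these, the quadratic relation $T_s^2 = (q-1)T_s + q$ is the special case $w = s$ of the second formula, and applying the first inductively along any reduced expression $w = s_1 \cdots s_k$ yields $T_w = T_{s_1} \star \cdots \star T_{s_k}$. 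Since the left-hand side depends only on $w$ and not on the reduced expression chosen, the braid relations $T_s T_t \cdots = T_t T_s \cdots$ hold automatically in the convolution algebra.

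The main obstacle is the verification of these structural formulas. The first, $T_s \star T_w = T_{sw}$ when $\ell(sw) > \ell(w)$, reflects the set identity $BsB \cdot BwB = BswB$ with multiplicity one, a standard consequence of the Bruhat lemma. The second requires a genuine point count over $\Fq$; it reduces to the rank-one parabolic $P_s = B \sqcup BsB$, whose quotient $P_s/B \cong \mathbb{P}^1(\Fq)$ has $q + 1$ points split as $\{B\}$ and $BsB/B \cong \mathbb{A}^1(\Fq)$. The coefficients $q$ and $q-1$ then emerge naturally from this split, either via the orbit-based convolution formula given in the paper applied to $BsB \cdot BwB$, or, more conceptually, by decomposing any element of $BsB \cdot BwB$ through its projection to $G/P_s$ and counting the $\Fq$-points of the $\mathbb{P}^1$-fiber that contribute to each Bruhat cell. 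Once these counts are in hand, the remainder of the proof is purely formal.
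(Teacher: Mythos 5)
The paper itself gives no proof of this statement—it simply cites \cite{Iwahori} and then illustrates the map in the example $G = PGL_n(\Fq)$—so the relevant comparison is with Iwahori's classical argument, which is essentially what you have reconstructed. Your outline (send $T_w \mapsto \mathbf{1}_{BwB}$, reduce everything to the two structural convolution formulas for $T_s \star T_w$, extract the quadratic relation as the case $w = s$ and the braid relations from the well-definedness of $T_{s_1}\star\cdots\star T_{s_k}$ along reduced words) is correct and is the standard route.

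One point of logical hygiene is worth flagging. You begin by declaring $\{T_w\}_{w\in W}$ to be a basis of $\mathcal{H}_q(W)$ so that the map is ``automatically a linear bijection.'' From the presentation alone one sees only that the $T_w$ \emph{span} (rewrite any word along a reduced expression using the quadratic relation); that they are linearly independent is itself nontrivial and, in many treatments, is \emph{deduced} from precisely the convolution-algebra realization you are constructing—so as written there is a potential circularity. The cleaner organization runs the other direction: first verify that the elements $\mathbf{1}_{BsB}$ of $\K[B\backslash G/B]$ satisfy the quadratic and braid relations (your rank-one $\mathbb{P}^1(\Fq)$ count plus the length-increasing formula), which gives a well-defined algebra homomorphism $\mathcal{H}_q(W) \to \K[B\backslash G/B]$; note it is surjective because $T_{s_1}\cdots T_{s_k} \mapsto \mathbf{1}_{BwB}$ for any reduced word; then, since the $T_w$ span the source and map onto a linearly independent set, the map is injective as well. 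This proves the theorem and, as a byproduct, that $\{T_w\}$ is a basis. A second small point: for the length-increasing case, the set identity $BsB\cdot BwB = BswB$ alone does not give the coefficient $1$; you need the sharper BN-pair statement that $BsB\times_B BwB \to BswB$ is a bijection when $\ell(sw) = \ell(w)+1$, which is what makes the convolution coefficient exactly one rather than merely nonzero.
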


Just to give a taste of how this map works, let's consider the case of
$G=PGL_n(\Fq)$.  In this case, $G$ acts transitively on the set of
complete flags in $\Fq^n$, with the stabilizer of each flag being a
Borel.  Thus $G/B$ is just the set of complete flags in this vector
space, with choosing a Borel $B$ specifying a preferred flag
$V_1\subset V_2\subset \cdots \subset V_n=\Fq^n$.  The identity in the
convolution multiplication is the indicator function of this preferred
flag $V_\bullet$; as a function on $G/B\times G/B$, this becomes the
the indicator of the diagonal.  Our generators of the Hecke algebra are  $T_{(i,i+1)}$ for $(i,i+1)$
one of the adjacent  transpositions in the symmetric group $S_n$.  We
send $T_{(i,i+1)}$ to the indicator function of the Schubert cell
\[C_{(i,i+1)}=\{V'_\bullet|V'_i\neq V_i, V_j'=V_j \text{ for all }j\neq i\}.\]
as a function on $G/B\times G/B$, this becomes the
the indicator of the pairs of flags that differ only in
$i$-dimensional subspace.

From our earlier discussion, we know that the natural way to
categorify functions on $G/B$ is to consider constructible sheaves on
$G/B$.  In order to obtain $B$-invariant functions, we need to
consider $B$-equivariant sheaves, or equivalently $G$-equivariant
sheaves on $G/B\times G/B$.
\begin{definition}
  We call a sheaf on a $G$-scheme $X$  {\bf weakly $G$-equivariant} if there
  is an isomorphism of sheaves $a^*\mathcal{F}=p^*\mathcal{F}$ where
  $a,p\colon G\times X \to X$ are the action and projection maps
  $(g,x)\mapsto gx,x$ respectively.\footnote{The reader might rightly
    protest that it would be better to consider strong equivariant sheaves,
    which is are sheaves together with a choice of isomorphism which
    satisfies a cocycle (associativity) condition.  This is fair, but
    not necessary at the present.  Ultimately, if the reader wants to ``do this right,''
they should use equivariant derived categories as well
\cite{BL,WWequ}, but in this context, this is not necessary for
understanding the basic point.}

We let $D^b(\mathsf{Sh}_G(X))$ denote the full subcategory of the
derived category whose cohomology is weakly equivariant and constructible.
\end{definition}
Now, let us consider the underlying algebraic group $G$, and $G/B$ as
an algebraic variety over $\Fq$.  We'll be interested in the derived category
$D^b(\mathsf{Sh}_G(G/B\times G/B))$ of weakly $G$-equivariant sheaves.
The $G$-orbits of $G/B\times G/B$ are all simply connected and there
are finitely many of them, so you can think of these as trivial vector
bundles on the different orbits, with some sort of topological glue
holding them together.  

In particular, if we take the function $\Phi$ for one of these
sheaves, we obtain a function on $G/B\times G/B$ which is constant on
$G$-orbits.  Put differently, $\Phi$ defines a natural map from the Grothendieck group
$D^b(\mathsf{Sh}_G(G/B\times G/B))$ to the Hecke algebra.  This map is
surjective, since we can consider the extension $i_!(\Q_\ell)_Y$ from
an orbit $Y$, which hits the indicator function of the orbit.

This map is not injective, but this is only because we only considered
a single field. Thus, given one sheaf $\mathcal{F}$ on $G/B$, we have
that $\Phi(\mathcal{F}^{\oplus q})= \Phi(\mathcal{F}(1))=q \Phi(\mathcal{F})$ where
$\mathcal{F}(1)$ is the Tate twist of $\mathcal{F}$.  However, 
\[\Phi^{(n)}(\mathcal{F}^{\oplus q})= q
\Phi^{(n)}(\mathcal{F})\qquad \Phi^{(n)}(\mathcal{F}(1))=q^n
\Phi^{(n)}(\mathcal{F})\]
so considering larger $n$ will fix this problem.  If we do this
carefully, we can construct a category whose Grothendieck group is
$\mathcal{H}_v(W)$: the subcategory of $D^b(\mathsf{Sh}_G(G/B\times
G/B))$ where the Frobenius acts by elements of $q^\Z$ on every stalk,
and $v$ corresponds to Tate twist.

Thus we find that:
\begin{corollary}
  The category $D^b\mathsf{Sh}_G(G/B\times G/B)$  categorifies the
  Hecke algebra.
\end{corollary}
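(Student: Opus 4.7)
The plan is to assemble the function-sheaf correspondence across all $\mathbb{F}_{p^n}$ into a ring isomorphism. Write $\mathcal{D}$ for the subcategory of $D^b\mathsf{Sh}_G(G/B\times G/B)$ described just above the corollary (Frobenius eigenvalues in $q^\Z$ at every stalk), and $K(\mathcal{D})$ for its Grothendieck group.

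First I would define a map $\Phi\colon K(\mathcal{D})\to \mathcal{H}_v(W)$. For $\mathcal{F}\in\mathcal{D}$, each function $\Phi^{(n)}(\mathcal{F})$ is $B$-biinvariant on $G(\mathbb{F}_{p^n})$, so by Iwahori's theorem it lies in the specialization $\mathcal{H}_{v=p^n}(W)$. The purity hypothesis on Frobenius eigenvalues together with the scaling $\Phi^{(n)}(\mathcal{F}(1))=p^n\Phi^{(n)}(\mathcal{F})$ guarantees that the family $\{\Phi^{(n)}(\mathcal{F})\}_{n\geq 1}$ is the evaluation at $v=p^n$ of a single element $\Phi(\mathcal{F})\in\mathcal{H}_v(W)$, under which Tate twist corresponds to multiplication by $v$. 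The relation \eqref{eq:ses2} at each $n$ implies this is well-defined on the Grothendieck group.

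Next, compatibility with multiplication: the convolution of sheaves $\mathcal{F}\star\mathcal{G}=(\pi_{13})_!(\pi_{12}^*\mathcal{F}\otimes\pi_{23}^*\mathcal{G})$ matches the convolution formula $f\star g=(\pi_{13})_!(\pi_{12}^*f\cdot\pi_{23}^*g)$ on functions. Proper base change together with Theorem \ref{th:Grothendieck} applied to the three maps $\pi_{ij}$ gives $\Phi^{(n)}(\mathcal{F}\star\mathcal{G})=\Phi^{(n)}(\mathcal{F})\star\Phi^{(n)}(\mathcal{G})$ for every $n$, whence $\Phi$ is a ring map. For surjectivity, the $G$-orbits on $G/B\times G/B$ are the Bruhat cells $C_w$, each an affine bundle and hence simply connected; the class of the extension $i_!(\Q_\ell)_{C_w}$ (with an appropriate Tate twist chosen to land in $\mathcal{D}$) has $\Phi$-image a $\Z[v,v^{-1}]$-multiple of the Iwahori basis element $T_w$, so the $T_w$ all lie in the image and generate $\mathcal{H}_v(W)$ as a $\Z[v,v^{-1}]$-module. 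For injectivity, Theorem \ref{th:injective} says $\prod_n\Phi^{(n)}$ is injective already on all constructible sheaves, and a class in $\ker\Phi$ has $\Phi^{(n)}=0$ for every $n$ by the interpolation of step one, hence vanishes in $K(\mathcal{D})$.

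The main obstacle is the interpolation underpinning step one: one must check that the purity condition on Frobenius weights genuinely allows the discrete family of specializations at the points $v=p,p^2,p^3,\ldots$ to be packaged into a single Laurent polynomial in $v$, and that this lift respects short exact sequences, Tate twist, and convolution. This requires tracking weights carefully on every stratum $C_w$ and using that a nonzero element of $\Z[v,v^{-1}]$ has only finitely many zeros among $\{p^n\}$; once this bookkeeping is in hand, every remaining step is a direct consequence of Iwahori's theorem, the Grothendieck trace formula, and the Bruhat stratification.
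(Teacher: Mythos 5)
Your proposal is correct and takes essentially the same route the paper sketches: the paper itself only indicates that $\Phi$ maps onto the Hecke algebra specialization, that the single-field map fails to be injective because of the Tate twist, and that restricting to the subcategory with Frobenius eigenvalues in $q^\Z$ and interpolating across all $\mathbb{F}_{p^n}$ fixes this, with $v$ tracking the Tate twist; you have filled in the same skeleton (Iwahori's theorem, Theorem~\ref{th:Grothendieck} for the ring map, Theorem~\ref{th:injective} for injectivity, Bruhat cells for surjectivity). The only point you might make still more explicit is that the Bruhat cells are simply connected, so the $G$-equivariant subcategory $\mathcal{D}$ has Grothendieck group free over $\Z[v,v^{-1}]$ on the classes $[i_!(\Q_\ell)_{C_w}]$, which both justifies your interpolation step and lets you sidestep any worry about whether $K(\mathcal{D})\to K(\mathsf{Sh})$ is injective.
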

  In fact, the
category $D^b\mathsf{Sh}_G(G/B\times G/B)$ is a geometric realization of the categorification
of the Hecke algebra by Soergel bimodules, or the diagrammatics of
Elias and Williamson (see \cite[Th. 6]{Webcomparison} and its proof).

This is only a very small taste of a very large story; this is
discussed in much greater detail in the book of Hotta, Takeuchi and
Tanisaki \cite{HTT}.  The category
$\mathsf{Sh}_G(G/B\times G/B)$ is related to very interesting
categories of representations of the Lie algebra $\mathfrak{g}$ called
Harish-Chandra bimodules and category $\cO$.  

One topic we did not have the space to consider is that of perverse
sheaves and intersection cohomology.  This is a huge topic, and
provides an important lens for all of the examples with constructible
sheaves we've considered.  One good starting point is the introductory
article \cite{WhatisPS}.  One of the key consequences of this theory
is that the category
$D^b(\mathsf{Sh}_G(G/B\times G/B))$ contains special objects called
{\bf intersection cohomology sheaves}.  These special complexes of 
sheaves give a special basis of the Hecke algebra called the
Kazhdan-Lusztig basis.  This is one of the prototypical examples of a
special (or ``canonical'') basis arising from categorification.  

\subsection{Hall algebras}
\label{sec:hall-algebras}

Hall algebras are discussed in much greater detail in the survey
article of Schiffmann \cite{SchiffHall}.  Here, we will consider the
rather narrow topic of how they fit in with the function-sheaf
correspondence; the general experience of the author is that this
connection is not nearly as well-known as it should be.  

Hall algebras arise from our philosophy when the underlying space $X$
is the moduli space of objects $\Ob_{\mathcal{A}}$ in some abelian
category.  Making careful mathematical sense of such a space can be
quite tricky; usually it must be thought of as some kind of stack.
Thus, throughout, we'll deal mostly with the most familiar example: if
$\Gamma$ is a quiver, then there is an abelian category of
representations of $\Gamma$, that is, of representations of $\Gamma$'s
path algebra.

There is a geometric space whose points are the
isomorphism classes of representations of $\Gamma$. For a dimension
vector $\mathbf{d}\colon \Gamma\to \Z_{\geq 0}$, let
\[ E_{\Bd}(k)=\oplus_{i\to j}\Hom(k^{d_i},k^{d_j})\] for $k$ any commutative ring;
this defines the functor of points for an algebraic variety over
$\Z$.  This space has an action of the affine algebraic group $G_\Bd=\prod
GL_{d_i}(k)$, where $GL_{d_i}(k)$ act by precomposition on
arrows from $i$ and by postcomposition on arrows to $i$.  

You can think of $E_\Bd$ as being a quiver representation on a free
$k$-module with a choice of basis in each of the spaces, and $G_\Bd$
as the group that acts by changing bases.  Thus, two elements of
$E_{\Bd}(k)$ represent the same representation of $\Gamma$ if and only
if they are in the same orbit under $G_\Bd(k)$.
\begin{definition}
  We let the moduli space of representations of this quiver be the
  union of the quotient spaces\footnote{In this case, it's much
    harder to sweep the issue of equivariance under the rug.  However,
    we are nothing if not persistent, and thus will do our best to
    achieve said sweeping.} $E_{\Bd}/G_{\Bd}$ for all $\Bd$.
\end{definition}

This moduli space has an additional structure, which arises from the
fact that there are short exact sequences, which form a related moduli space.

For the quiver $\Gamma$, you can break these sequences up into components where the submodule has dimension $\bd'$ and
the quotient $\bd''$. You can think of this as the space 
\[ E_{\Bd',\Bd''}(k)=\oplus_{i\to j}\Hom(k^{d_i'},k^{d_j'})\oplus
\Hom(k^{d_i''},k^{d_j'})\oplus \Hom(k^{d_i''},k^{d_j''})\]
modulo the action of the group
\[G_{\Bd',\Bd''}:=\prod_i\{g\in GL(k^{d_i'}\oplus k^{d_i''})|g(k^{d_i'})=k^{d_i'}\}.\]

The moduli space of short exact sequences has 3 projection maps $\pi_s,\pi_t,\pi_q$ considering the
submodule, total module and quotient.  In the case of quiver
representations, this is considering the action
on $k^{d_i'},k^{d_i'}\oplus k^{d_i''}, $ and $ k^{d_i''}$
respectively.

\begin{definition}
  We let the {\bf Hall algebra} be the constructible functions on
  $\Ob_{\mathcal{A}}(k)$ the points of the moduli space for $k=\C,\Fq$
  equipped the algebra structure
  \begin{equation}
    f\star g=(\pi_t)_*(\pi_s^*f\cdot \pi_q^*g);\label{eq:hall}.
  \end{equation}
\end{definition}

Similarly, we can define a monoidal structure on the category of
constructible sheaves on $\Ob_{\mathcal{A}}$ via essentially the same
formula:
\begin{equation}
\mathcal{F}\star\mathcal{G}=(\pi_t)_*(\pi_s^*\mathcal{F}\otimes
\pi_q^*\mathcal{G}).\label{eq:conv}
\end{equation}
\begin{remark}
  Actually all of these formulas are slightly wrong.  The equation \eqref{eq:hall} should have
  included a power of $q$ (where $q$ is the size of the field or $-1$
  if $k=\C$), depending on the dimension of the fiber of $\pi_s\times
  \pi_q$.  Similarly, \eqref{eq:conv} should include a homological
  shift by the same dimension.  This shift assures that convolution
  commutes with Verdier duality.  
\end{remark}

Thus Theorem \ref{th:Grothendieck} implies that \eqref{eq:hall}
categorifies \eqref{eq:conv}:
\begin{theorem}\label{Hall-categorify}
  The map $\Phi^{(n)}\colon
  K(\mathsf{Sh} (\Ob_{\mathcal{A}}(\mathbb{F}_{q})))\to
  \Q_\ell[\Ob_{\mathcal{A}}(\mathbb{F}_{q^n})]$ is a homomorphism of
  rings and for every class in
  $K(\mathsf{Sh}(\Ob_{\mathcal{A}}(\mathbb{F}_{q})))$, there is a $n$ for
  which $\Phi^{(n)}$ does not kill this class. 
\end{theorem}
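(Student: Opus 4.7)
The plan is to split the theorem into its two assertions. The ring-homomorphism statement I would reduce to three basic compatibilities of $\Phi^{(n)}$ with the operations out of which both convolutions are built, and the joint non-vanishing statement I would deduce directly from Theorem \ref{th:injective}.

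First I would unpack the definitions. On the sheaf side the convolution reads $\mathcal{F}\star\mathcal{G} = (\pi_t)_!(\pi_s^*\mathcal{F}\otimes \pi_q^*\mathcal{G})$, and \eqref{eq:hall} has exactly the same shape on functions. So the ring-homomorphism property is reduced to checking three compatibilities: \textbf{(a)} pullback, $\Phi^{(n)}(f^*\mathcal{E})=f^*\Phi^{(n)}(\mathcal{E})$, which is immediate since $(f^*\mathcal{E})_x\cong \mathcal{E}_{f(x)}$ with matching Frobenius action; \textbf{(b)} tensor product, $\Phi^{(n)}(\mathcal{F}'\otimes\mathcal{G}')(x)=\Phi^{(n)}(\mathcal{F}')(x)\cdot\Phi^{(n)}(\mathcal{G}')(x)$, which follows from the Künneth isomorphism on stalks combined with multiplicativity of trace under tensor product and the standard alternating-sum trick; and \textbf{(c)} proper pushforward, $\Phi^{(n)}(f_!\mathcal{H})=f_!\Phi^{(n)}(\mathcal{H})$, which is exactly Theorem \ref{th:Grothendieck}. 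Chaining (a), (b), (c) along the $(\pi_s,\pi_q,\pi_t)$ correspondence yields the identity $\Phi^{(n)}(\mathcal{F}\star\mathcal{G})=\Phi^{(n)}(\mathcal{F})\star\Phi^{(n)}(\mathcal{G})$.

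Second, the non-vanishing assertion (that no nonzero class in $K(\mathsf{Sh}(\Ob_{\mathcal{A}}(\mathbb{F}_q)))$ is killed by every $\Phi^{(n)}$) is a direct application of Theorem \ref{th:injective}, once one knows that $\Ob_{\mathcal{A}}$ is of the appropriate type.

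The hard part will be the stackiness of $\Ob_{\mathcal{A}}$. This moduli space is only a union of quotients $E_{\Bd}/G_{\Bd}$, so both the convolution diagram and the trace functions must be interpreted equivariantly, and the normalization shifts flagged in the remark just before the theorem statement (a homological shift by the fiber dimension of $\pi_s\times\pi_q$ on the sheaf side, matched by the corresponding power of $q$ on the function side) must be inserted consistently so that (c) applies to the correctly renormalized $(\pi_t)_!$ and so that the Behrend-style point count on the quotient matches the Lefschetz trace on the cover. Once those conventions are pinned down and (a)-(c) are established in $D^b_{G_{\Bd}}(E_{\Bd})$ for each dimension vector $\Bd$ separately, the assembly into the statement for $\Ob_{\mathcal{A}}$ is formal; the conceptual content is carried entirely by Theorems \ref{th:injective} and \ref{th:Grothendieck}.
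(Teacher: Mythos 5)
Your proposal is correct and takes essentially the same approach as the paper: the paper presents this theorem as an immediate corollary of Theorem \ref{th:Grothendieck} (for the ring-homomorphism part) together with Theorem \ref{th:injective} (for the non-vanishing part), and your compatibilities (a)--(c) are exactly the standard unpacking of why the Grothendieck trace formula makes $\Phi^{(n)}$ intertwine the two convolutions. The one small point worth flagging is that you write $(\pi_t)_!$ where the paper's displayed formulas \eqref{eq:hall}, \eqref{eq:conv} use $(\pi_t)_*$; since $\pi_t$ is a representable proper map (fibers are projective Quot-type schemes parametrizing subrepresentations of fixed dimension vector), the two agree, and your version is in fact the one to which Theorem \ref{th:Grothendieck} applies directly.
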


In the case of representations of $\Gamma$, we have a second way of
thinking about this Hall algebra.  Work of Ringel \cite{Ringel} shows
that we have a homomorphism from the quantized
universal enveloping algebra $U_q(\mathfrak{n})$ of the maximal
unipotent subalgebra of the  associated Kac-Moody algebra of
$\Gamma$ to the Hall algebra.  This map sends the Chevalley generator $E_i$ to
the indicator function of a trivial representation which is 1
dimensional and supported on $i$.  Thus, Theorem \ref{Hall-categorify}
suggests we should be able to categorify $U_q(\mathfrak{n})$  by
replacing these indicator functions with the corresponding constant
sheaf on $E_{\al_i}/G_{\al_i}\cong */\mathbb{G}_m$.  This was, in fact,
carried out by Lusztig \cite{Lus91,Lusbook} and leads to his
construction of canonical bases for universal enveloping algebras.

This construction was given a different spin in the work of Rouquier
\cite{RouQH,Rou2KM} and Varagnolo-Vasserot \cite{VV}, who showed that
the resulting categories of sheaves can be understood algebraically
using KLR algebras.

\section{Symplectic resolutions}
\label{sec:sympl-resol}

There is one final context for categorification we want to discuss:
that of conical symplectic resolutions of singularities.  These are closely
allied to the constructible sheaves we discussed, but also bear some
similarities to coherent sheaves.

A {\bf conical symplectic resolution} is an algebraic variety $\fM$
over $\C$ which is equipped with:
\begin{itemize}
\item a birational projective morphism $\fM\to \fN$ to an affine
  variety
\item an algebraic 2-form $\Omega$ such that $\Omega$ is symplectic
\item a conical $\bS\cong \C^*$-action on $\fM$ and $\fN$ compatible with the map such
  that $\Omega$ has weight $n>0$ for this $\bS$-action.  
\end{itemize}
These varieties have many remarkable properties.  The most relevant
for us is that they can be quantized.  The symplectic form $\Omega$
induces a Poisson bracket on functions; this Poisson bracket is
actually the leading order part of a quantization.  That is:
\begin{theorem}
  There is a $\bS$-equivariant sheaf of algebras $\cQ$ flat over $\C[[h]]$ on $\fM$ such that
  $\cQ/h\cQ\cong \cO_\fM$, and $[f,g]\equiv h\{f,g\}\pmod h^2$.  In
  fact, up to isomorphism, such algebras are in canonical bijection
  with $H^2(\fM;\C)$.  
\end{theorem}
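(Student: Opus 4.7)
The plan is to combine a Fedosov-style existence argument with a deformation-theoretic classification, then use $\bS$-equivariance to cut the classifying space down from $H^2(\fM;\C)[[h]]$ to $H^2(\fM;\C)$. Throughout I would work with the sheaf of $\C[[h]]$-algebras on $\fM$ in the analytic (or étale) topology, since locally the resolution $\fM$ is Darboux-equivalent to a formal polydisk in the symplectic vector space $(T^*\C^n,\omega_{\operatorname{std}})$, where the Moyal-Weyl star product provides a canonical quantization.

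The first step is local existence: on a Darboux chart $U\subset \fM$, a symplectic form $\Omega$ admits a star product $\star$ making $(\cO_U[[h]],\star)$ into an associative $\C[[h]]$-algebra with $[f,g]=h\{f,g\}+O(h^2)$, by Fedosov's construction applied to any symplectic connection. The second step is to globalize. The obstruction to gluing two such local quantizations on overlaps lies in the second cohomology of the sheaf of derivations modulo Hamiltonian derivations; one inductively constructs isomorphisms order-by-order in $h$, with obstructions in $H^2(\fM;\C)$ and ambiguities in $H^1(\fM;\C)$ arising from the de Rham complex via the Hochschild-Kostant-Rosenberg isomorphism $HH^\bullet(\cO_\fM)\cong H^\bullet(\fM;\Omega^\bullet_\fM)$, with the symplectic form identifying $\Omega^1\cong T_\fM$.

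The third step is the classification. By Bezrukavnikov-Kaledin, for any symplectic variety the set of isomorphism classes of non-equivariant formal quantizations is in bijection with the ``period'' $[\Omega]/h + H^2_{dR}(\fM;\C)[[h]]$ via the characteristic class construction. So far, this would give too large a parameter set. To cut it down, I would invoke $\bS$-equivariance: the conical action contracts $\fM$ to the core, and a period class of weight $n$ for $\bS$ (matching the weight of $\Omega$) must be a \emph{scalar} times a class in $H^2(\fM;\C)$ rather than a formal $h$-series, since $h$ itself is given a compensating $\bS$-weight. Concretely, one shows that any $\bS$-equivariant quantization has a canonical gauge in which the period lies in $h^{-1}[\Omega]+H^2(\fM;\C)$, giving the claimed bijection.

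The hard part will be establishing the equivariant gauge-fixing that rigidifies the period to a finite (rather than formal) class. The key input is that $\bS$-semisimplicity on each graded piece of $\cQ$ lets one average gauge transformations of each $h$-order to land in the weight-zero subspace; combined with the positive-weight hypothesis on $\Omega$ (and hence on $h$), any would-be obstruction in $h^k H^2(\fM;\C)$ for $k\geq 1$ has nontrivial $\bS$-weight and must vanish. Once this is in place, existence of the equivariant quantization associated to an arbitrary class in $H^2(\fM;\C)$ follows by running the Fedosov construction starting from an $\bS$-invariant symplectic connection and a chosen representative closed $2$-form.
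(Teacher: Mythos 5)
The paper does not prove this theorem; it is stated as a black box and the proof lives in the Bezrukavnikov--Kaledin / Losev / Braden--Licata--Proudfoot--Webster literature. So there is no ``paper proof'' to compare against, and I am evaluating your sketch on its own. Your route --- Fedosov existence plus the Bezrukavnikov--Kaledin period classification, cut down by $\bS$-weight considerations --- is exactly the standard argument, and the two decisive steps are present: the non-equivariant classification by a period lying in a coset of $H^2(\fM;\C)[[h]]$, and the observation that $\bS$-invariance of the period (with $h$ of weight $n>0$ and $H^2(\fM;\C)$ of weight $0$) forces all coefficients of $h^k$, $k\geq 1$, to vanish, leaving a single copy of $H^2(\fM;\C)$.

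Two points need sharpening before this would count as a proof. First, Bezrukavnikov--Kaledin does \emph{not} apply to ``any symplectic variety''; the period map is only a bijection under a cohomological hypothesis (the admissibility condition $H^1(\fM;\cO_\fM)=H^2(\fM;\cO_\fM)=0$). This is genuinely load-bearing and is where the structure of a conical symplectic resolution enters: because $\fM$ is a projective resolution of the affine variety $\fN$, Grauert--Riemenschneider vanishing gives $H^{>0}(\fM;\cO_\fM)=0$. You should invoke this explicitly, since it is the structural reason the theorem is true in this setting and false for a generic symplectic variety. Second, the phrase ``obstructions in $H^2(\fM;\C)$'' misdescribes what that group does in the gluing step. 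For a symplectic variety the obstruction to the \emph{existence} of a quantization vanishes (this is the content of Fedosov's construction, or of the HKR degeneration under the admissibility hypothesis above), and $H^2_{dR}$ appears as the space \emph{parametrizing} nonisomorphic quantizations, not as an obstruction group; the would-be obstruction sits in $HH^3$ and is what the Fedosov/BK machinery kills. Relatedly, for a conical resolution $H^1(\fM;\C)=0$, so the ``ambiguities in $H^1$'' you mention are in fact absent --- which is consistent with, and needed for, the claimed clean bijection. Neither remark derails the plan, but both hypotheses must be surfaced and verified to turn your sketch into a proof.
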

Since $[-,-]$ has weight $0$ under $\bS$, and $\{-,-\}$ weight $-n$,
we must have that $h$ has $\bS$-weight $n$ for the desired equation to hold.

The corresponding cohomology class is called the {\bf period} of a
quantization.  If we take the global sections $\Gamma(\fM;\cQ)$, then
we can ``set $h=1$'' by adjoining $h^{-1/n}$, and considering the
invariant sections.  Since $h^{-1/n}$ has weight $-1$, any section
which is a $\bS$-weight vector can be multiplied by an appropriate
power of $h$ to make it invariant.  Let $\cD:=\cQ[h^{-1/n}]$.
\begin{theorem}
  The invariant sections $A:=\Gamma(\cM;\cD)^\bS$ is a non-commutative
  algebra filtered by its intersections $A(m):=A\cap
  \Gamma(\fM;h^{-m/n}\cQ)$.  There is a canonical isomorphism
  $\C[\fM]\cong A$.  
\end{theorem}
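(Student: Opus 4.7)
The plan is to establish the theorem in three steps: check that $A$ is a filtered algebra, construct a canonical symbol map $\sigma\colon \gr A \to \C[\fM]$, and verify that this map is an isomorphism.

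First I would check the algebra and filtration structure. Multiplication of $\bS$-invariant sections is $\bS$-invariant because $\bS$ acts on $\cQ$ (hence on $\cD$) by algebra automorphisms, so $A$ is a subalgebra of $\Gamma(\fM;\cD)$. The inclusion $A(m)\cdot A(m')\subseteq A(m+m')$ is immediate from $h^{-m/n}\cQ\cdot h^{-m'/n}\cQ\subseteq h^{-(m+m')/n}\cQ$. Non-commutativity of $A$ is witnessed by the relation $[f,g]\equiv h\{f,g\}\pmod{h^2}$ together with the non-degeneracy of the Poisson bracket arising from the symplectic form $\Omega$.

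Second I would construct the symbol map. Since $h$ has $\bS$-weight $n$, an invariant section $a\in A(m)$ corresponds, after multiplication by $h^{m/n}$, to a section of $\Gamma(\fM;\cQ)$ of pure $\bS$-weight $m$. Reducing modulo $h$ lands in the $\bS$-weight $m$ component of $\Gamma(\fM;\cO_\fM)=\C[\fM]$, defining $\sigma\colon A(m)\to \C[\fM]_m$. This vanishes on $A(m-1)$ because the corresponding $h^{m/n}a$ then lies in $h\cQ$, so we obtain a graded algebra map $\sigma\colon \gr A\to \C[\fM]$. For injectivity, suppose $\sigma(a)=0$; then $h^{m/n}a\in h\cdot \Gamma(\fM;\cQ)$, and flatness of $\cQ$ over $\C[[h]]$ lets us divide by $h$ to conclude $a\in A(m-n)\subseteq A(m-1)$.

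The main obstacle is surjectivity. Given a weight-$m$ function $f\in \C[\fM]_m$, I would build a lift inductively in powers of $h$: starting from any local lift to $\cQ$, at each stage one extends a lift modulo $h^{k+1}$ to one modulo $h^{k+2}$, with obstruction living in $H^1(\fM;\cO_\fM)$ in the appropriate $\bS$-weight piece, since $h^{k+1}\cQ/h^{k+2}\cQ\cong \cO_\fM$. The key input is the vanishing $H^i(\fM;\cO_\fM)=0$ for $i>0$: symplectic resolutions have rational singularities, so $R\pi_*\cO_\fM=\cO_\fN$, and the Leray spectral sequence combined with affineness of $\fN$ produces the vanishing. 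Consequently a global invariant lift exists, and since the contracting $\bS$-action on $\fN$ makes each weight subspace of $\C[\fN]=\C[\fM]$ finite-dimensional, the construction is effective. Assembling these lifts weight-by-weight yields the isomorphism $\C[\fM]\cong \gr A$; choosing weight representatives produces a canonical identification of filtered vector spaces $\C[\fM]\cong A$.
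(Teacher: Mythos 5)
The paper, being a survey, states this theorem without supplying a proof, so there is no in-text argument to compare yours against; what you have written is the standard argument from the quantization literature (Braden--Licata--Proudfoot--Webster and related work). You correctly isolate the two substantive inputs: torsion-freeness of $\cQ$ over $\C[[h]]$, which gives injectivity of the symbol map, and the vanishing $H^i(\fM;\cO_\fM)=0$ for $i>0$ (from $R\nu_*\cO_\fM=\cO_\fN$, i.e.\ Grauert--Riemenschneider together with the triviality of $\omega_\fM$, combined with affineness of $\fN$ via Leray), which kills the lifting obstructions and gives surjectivity.

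Two points deserve comment. First, the theorem almost certainly intends $\gr A\cong\C[\fM]$: $A$ is non-commutative, so it cannot be canonically isomorphic as an algebra to the commutative ring $\C[\fM]$, and the filtered-vector-space identification you append at the end ``by choosing weight representatives'' is not canonical. The canonical content is exactly what you actually prove, $\gr A\cong\C[\fM]$ as graded commutative algebras. Second, the fractional powers of $h$ need more care than either you or the stated theorem give. Since $\cQ$ is a $\C[[h]]$-lattice, inside $\cD=\cQ\otimes_{\C[[h]]}\C((h^{1/n}))$ one has $h^{1/n}\cQ\cap\cQ=0$; so, taken literally, the pieces $A(m)=A\cap\Gamma(\fM;h^{-m/n}\cQ)$ do not form an increasing filtration, and your step ``$h^{m/n}a$ then lies in $h\cQ$'' does not parse, since $h^{1/n}\cQ\not\subseteq h\cQ$. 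The repair is to replace $\cQ$ by the $\C[[h^{1/n}]]$-lattice $\cQ[h^{1/n}]$, which is still flat and has the same reduction $\cO_\fM$ modulo $h^{1/n}$; with that adjustment the filtration is genuinely increasing, your symbol map $A(m)/A(m-1)\to\C[\fM]_m$ is well defined, and the injectivity and surjectivity arguments go through as you describe.
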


We call a $\bS$-equivariant module over $\cD$ {\bf good} if it is
isomorphic to the base extension of a coherent (i.e. locally finitely
generated) $\cQ$ module.  The categories of good $\cD$ modules and
finitely generated $A$-modules are related by an adjoint pair of
functors $$\secs\colon\cD\mmod\to A\mmod\qquad \Loc\colon
A\mmod\to \cD\mmod$$ which are often, but not always equivalences.

\begin{remark} The category $\cD\mmod$ is much less sensitive to changing the
  parameter that defines the quantization.  Any line bundle $L$ on
  $\fM$ can be quantized to a right module $\mathcal{L}$ over $\cD$;
  however, this quantization does not have a natural left $\cD$-module
  structure.  Instead $\End_{\cD}(\mathcal{L})$ is a new quantization,
  corresponding to a cohomology class differing from that of $\cD$ by
  the Euler class of $L$.  

Since every element of $H^2(\fM;\Z)$ is the
  Euler class of a line bundle, this means that the
  $H^2(\fM;\Z)$-cosets of $H^2(\fM;\C)$ give Morita equivalent
  quantizations.  Thus, without loss of generality, we can add a large
  integer multiple of an ample class on $\fM$ to the period of $\cD$.
  For a sufficiently large multiple, we have that localization holds
  by \cite{BLPWquant}.
\end{remark}

The category $\cD\mmod$ also has the enormous advantage of allowing
one to use the methods of geometric categorification.  Kashiwara and
Schapira \cite{KSdq} define a notion of a Euler class for a good $\cD$-module.
While this has a general definition using Hochschild homology, it can
actually be thought of in a relatively straightforward way in this
special case.

The most interesting $\cD$-modules are {\bf holonomic}; that is, their
support is a half-dimensional subvariety of $\fM$.  If we fix a good
$\cD$-module sheaf $\cM$, and $x\in \supp \cM$ a point where the
support is smooth, then the completion of $\cM$ at this point will be
a free module over the functions on the completion of $\supp \cM$ at
this point.  The rank of this module is constant on an open subset of
the component containing $x$, and thus defines an invariant $r_C$ of this
component.  

\begin{definition}
  The Euler class of $\cM$ is the sum  $e(\cM)=\sum r_C[C]\in
  H_{\operatorname{top}}^{BM}(\supp\cM)$.  
\end{definition}
This is shown in \cite[7.3.5]{KSdq} to be equivalent to the more
general definition.  

Unfortunately, one has to be careful about what an index formula means
in this case.  There is no definition of a pushforward in this
context.  Instead, one has to rely on the interactions between
modules.  For example:
\begin{theorem}
  Let $\cM$ and $\cN$ be good holonomic modules such that $\supp
  \cM\cap \supp\cN$ is compact.  Then \[
\sum_{i=0}^\infty (-1)^i\dim \Ext^i(\cM,\cN)=\int_{\supp
  \cM\cap \supp\cN}e(\cM)\cap e(\cN),\]
  where $\cap$ is the usual intersection product of homology classes.
\end{theorem}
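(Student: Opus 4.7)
The plan is to deduce this from the general Kashiwara--Schapira machinery of Hochschild/Euler classes in deformation quantization. First I would check that both sides make sense. Since $\cM$ and $\cN$ are good holonomic and $\supp \cM\cap \supp\cN$ is compact, the complex $\cRHom_{\cD}(\cM,\cN)$ has bounded cohomology with finite-dimensional stalks supported in the compact set $\supp \cM\cap \supp\cN$, so the alternating sum $\chi(\cM,\cN):=\sum(-1)^i\dim\Ext^i(\cM,\cN)$ is a well-defined integer; meanwhile the intersection product $e(\cM)\cap e(\cN)$ lies in $H_0^{BM}(\supp\cM\cap\supp\cN)\cong H_0(\supp\cM\cap\supp\cN)$ by compactness, so its integral is a number.

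Next I would invoke the microlocal Riemann--Roch-type theorem of Kashiwara--Schapira (see [KSdq, Ch.~7], in particular the multiplicativity of Hochschild classes and the trace formula). The content is twofold: (i) to each good $\cD$-module $\cM$ one attaches a Hochschild class $hh(\cM)$ in the Hochschild homology of $\cD$ supported on $\supp\cM$, and under the HKR-type identification this class lives in $H^{BM}_{\mathrm{top}}(\supp\cM)$ and coincides with the cycle $e(\cM)=\sum r_C[C]$ defined above (this is the identification used in [KSdq, 7.3.5]); (ii) for two good modules whose supports meet properly (in particular compactly), one has the multiplicativity
\[
hh\bigl(\cRHom_{\cD}(\cM,\cN)\bigr)\;=\;hh(\cM)\cap hh(\cN)
\]
in the Borel--Moore homology of $\supp\cM\cap\supp\cN$, where the intersection product is the usual one on the underlying symplectic manifold $\fM$.

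Finally I would apply the trace/index formula in its simplest form: for a complex $\cK$ of $\cD$-modules with compact support whose cohomology sheaves have finite-dimensional stalks, the Euler characteristic of its global sections equals $\int_{\supp\cK} hh(\cK)$. Applying this to $\cK=\cRHom_{\cD}(\cM,\cN)$ and combining with step~(ii) yields
\[
\chi(\cM,\cN)\;=\;\int_{\supp\cM\cap\supp\cN} hh(\cM)\cap hh(\cN)\;=\;\int_{\supp\cM\cap\supp\cN} e(\cM)\cap e(\cN),
\]
which is the claim.

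The main obstacle is step~(ii): proving multiplicativity of the Hochschild/Euler class under $\cRHom$. The standard argument proceeds by reducing to a local model near a smooth point of the intersection (using the conical $\bS$-action and the symplectic form to put $\cM$ and $\cN$ in a transverse-Lagrangian normal form), where one can compute both sides by hand on a Darboux chart; the identification of the local intersection multiplicity with the product of generic ranks is what ultimately delivers the formula. The compactness of $\supp\cM\cap\supp\cN$ is essential here, both to make $\Ext^i$ finite-dimensional and to allow one to integrate against a homology class; without it one would only get a local, rather than numerical, identity.
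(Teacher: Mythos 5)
The paper itself gives no proof of this theorem; it is stated as a known result, implicitly deferring to Kashiwara--Schapira \cite{KSdq}, which is cited a few lines above for the identification of the ad hoc Euler class $e(\cM)=\sum r_C[C]$ with the Hochschild-theoretic one. Your outline is a faithful reconstruction of the KS argument: (i) $e(\cM)$ is the Hochschild class of $\cM$ supported on $\supp\cM$; (ii) Hochschild classes are multiplicative under $R\mathscr{H}\!om$; (iii) a trace/index formula converts this to the alternating sum of $\dim\Ext^i$. Two points deserve tightening. First, $R\mathscr{H}\!om_{\cD}(\cM,\cN)$ is not a complex of $\cD$-modules but a complex of sheaves over the base coefficient ring, so the ``trace formula for a complex of $\cD$-modules with compact support'' in your final step should instead be phrased for such sheaf complexes (equivalently: the global Euler characteristic of a compactly supported constructible complex equals the integral of its Euler class) --- this is how Kashiwara--Schapira state it. Second, the multiplicativity you write as $hh(R\mathscr{H}\!om_{\cD}(\cM,\cN))=hh(\cM)\cap hh(\cN)$ actually runs through the dual: what multiplicativity gives you is $hh(\cM^*)\cap hh(\cN)$, and you then need the separate fact that for good holonomic modules in the DQ setting the Euler class is invariant under duality, $e(\cM^*)=e(\cM)$, before you can land on the displayed formula. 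Neither remark breaks the argument, but both are genuine steps in \cite{KSdq} rather than free passes. Your closing observation that compactness of $\supp\cM\cap\supp\cN$ is what makes both sides finite is correct and is exactly the hypothesis the theorem is designed around.
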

More generally, one can replace pushforwards with convolutions.  If we
quantize $\fM\otimes \bar{\fM}$ (as usual, $\bar{\fM}$ is the space
$\fM$ with opposite symplectic form) with the quantization
$\cD\hat{\boxtimes}\cD^{\operatorname{op}}$, then we can view modules
over this quantization as bimodules.  In fact, the localization and
sections functors relate this category to the category of
$A\operatorname{-}A$ bimodules.  
\begin{definition}
  A $\cD\hat{\boxtimes}\cD^{\operatorname{op}}$-module $\mathcal{H}$ is {\bf
    Harish-Chandra} if it possesses a $\cQ$-lattice $\mathcal{H}^0$
  such that $\mathcal{H}^0/h \mathcal{H}^0$ is killed by $1\otimes
  f-f\otimes 1$ for any global function $f$ on $\fM$.  This is
  equivalent to $\mathcal{H}^0/h \mathcal{H}^0$ having support on the scheme $\fM\times_{\fN}\fM$.
\end{definition}
Let $\HC$ be the category of Harish-Chandra bimodules on $\cM$, and
$D^b(\HC)$ the subcategory of the bounded derived category with
Harish-Chandra cohomology.  
 Let $\nu\colon\fM\to\fN$ be the
resolution map with
Steinberg variety $\fZ:=\fM\times_{\fN}\fM$.  Consider the three different projections
$p_{ij}\colon\fM\times\fM\times\fM\to\fM\times\fM$ as well as the two
projections $p_i\colon\fM\times\fM\to\fM$.  The cohomology 
$H^{2\dim\fM}_\fZ(\fM\times\fM; \C)$ with supports in $\fZ$ is
isomorphic to $H_{2\dim\fM}^{BM}(\fZ)$ and has a convolution product
given by the formula
$$\al\star\b := (p_{13})_*(p_{12}^*\al\cdot p_{23}^*\b),$$
making it into a semisimple algebra \cite[8.9.8]{CG97}.  
For any closed subvariety $\fL\subset\fM$ with the property that $\fL =
\nu^{-1}(\nu(\fL))$, there is a degree-preserving action of this 
algebra on the cohomology $H^*_{\fL}(\M; \C)$ given by the
formula $$\al\star\gamma := (p_2)_*(\al\cdot p_1^*\gamma).$$ 

Similarly, we have a convolution product on Harish-Chandra bimodules
defined by
\begin{equation}\label{convolution}
\cH_1\star\cH_2:=(p_{13})_*(p_{12}^{-1}\cH_1\overset{L}\otimes_{p_2^{-1}\cD_{\la'}}p_{23}^{-1}\cH_2),
\end{equation}
making $D^b(\HC)$ into a monoidal category.
The same formula \eqref{convolution} defines an action of $D^b(\HC)$
on the derived category $D^b_{\fL}(\cD\mmod)$ of complexes with cohomology supported on $\fL$
\begin{theorem}
  Euler class defines an algebra homomorphism $K(\HC)\to
  H^{BM}_{\operatorname{top}}(\fM\times_{\fN}\fM)$ which interwines
  the representations on $K(D^b_{\fL}(\cD\mmod))$ and $H^{BM}_{\operatorname{top}}(\fL)$.
\end{theorem}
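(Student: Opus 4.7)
The plan is to verify that the Euler class construction is compatible with the operations defining the convolution products on both sides, in close analogy with the strategy used to prove the Chern character is a ring homomorphism via Grothendieck-Hirzebruch-Riemann-Roch.

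First, I would verify that Euler class descends to a well-defined homomorphism on the Grothendieck group: for a short exact sequence $0\to\cH'\to\cH\to\cH''\to 0$ of Harish-Chandra bimodules we need $e(\cH)=e(\cH')+e(\cH'')$. This can be seen directly from the generic-rank description on components of the support (noting that on each irreducible component either one of $\cH'$, $\cH''$ has the same generic rank as $\cH$ with the other's component absent, or the ranks add), and is built in more intrinsically via the Hochschild homology formulation of \cite[Ch.~7]{KSdq}. Target-side: the convolution algebra structure on $H^{BM}_{\operatorname{top}}(\fZ)$ is defined on $\fM\times\fM\times\fM$ using the same triple of projections $p_{ij}$ used for Harish-Chandra convolution, so the two sides have matching ``shape.''

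Second, the heart of the argument is showing $e(\cH_1\star\cH_2)=e(\cH_1)\star e(\cH_2)$. Unpacking the formula \eqref{convolution}, this reduces to three compatibilities: (a) Euler class commutes with non-characteristic inverse image, $e(p_{ij}^{-1}\cH)=p_{ij}^*e(\cH)$, which is immediate from the generic-rank picture once one checks that the projections $p_{12},p_{23}$ are non-characteristic on the relevant supports (ensured by the Harish-Chandra support condition $\supp\subset\fM\times_\fN\fM$ in each factor); (b) Euler class of a derived tensor product is the intersection product of Euler classes, which follows from the local, multiplicative nature of the generic-rank invariant on the transverse intersection of supports; and (c) Euler class commutes with proper direct image $(p_{13})_*$, which is the crucial input and is precisely the content of the Kashiwara-Schapira index theorem \cite[Ch.~7]{KSdq} in this setting, a $\cD$-module analogue of GHRR. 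Properness of $p_{13}$ restricted to $\supp(p_{12}^{-1}\cH_1\otimes p_{23}^{-1}\cH_2)$ is guaranteed by the Harish-Chandra condition together with the projectivity of $\fM\to\fN$.

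Third, the intertwining statement is obtained by running exactly the same three compatibilities (a)--(c) with the convolution formula for the action of $\HC$ on $D^b_{\fL}(\cD\mmod)$, namely $\cH\star\cM=(p_2)_*(p_{12}^{-1}\cH\otimes p_1^{-1}\cM)$. This matches the BM-homology formula $\alpha\star\gamma=(p_2)_*(\alpha\cdot p_1^*\gamma)$ that defines the action of the Steinberg algebra on $H^*_{\fL}(\fM;\C)\cong H^{BM}_*(\fL)$, so the same verification yields $e(\cH\star\cM)=e(\cH)\star e(\cM)$.

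The main obstacle is step (c): controlling Euler class under proper pushforward of $\cD$-modules. Steps (a) and (b) are essentially local computations of generic ranks on smooth loci, but pushforward can enlarge or collapse supports in ways that require the full strength of the Kashiwara-Schapira index theorem, including careful handling of the filtration making $\cD$-modules ``coherent'' via $\cQ$-lattices and the identification of the direct-image Euler class with the topological pushforward of homology classes. Verifying that the support hypothesis built into the definition of Harish-Chandra bimodules is exactly what is needed to make this pushforward well-defined and to land in $H^{BM}_{\operatorname{top}}(\fM\times_\fN\fM)$ rather than in some larger homology group is the technical core of the argument.
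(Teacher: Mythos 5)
The paper states this theorem without proof --- it is a survey, and the result is asserted with a pointer to Kashiwara--Schapira \cite{KSdq} for the relevant Euler-class machinery. So there is no "paper's own proof" to compare against; I can only assess your outline on its own terms.

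Your decomposition into (a) compatibility with smooth inverse image, (b) compatibility with derived tensor, and (c) compatibility with proper pushforward is exactly the natural strategy, and you are right that (c) is where the real content lives and where the Kashiwara--Schapira index theorem is invoked. Two places where you are gliding over genuine subtleties, though. In step (a), the non-characteristicity language is misapplied: $p_{12}$ and $p_{23}$ are smooth projections, so pullback of $\cD$- (or $\cQ$-) modules along them is automatic; "non-characteristic" is the condition you would need for pullback along a closed embedding, and it is not playing the role you assign it. In step (b), "the local, multiplicative nature of the generic-rank invariant on the transverse intersection of supports" quietly assumes transversality. The supports of $p_{12}^{-1}\cH_1$ and $p_{23}^{-1}\cH_2$ meet in the expected dimension by a Lagrangian count, but not necessarily transversally pointwise; the derived tensor product carries higher $\operatorname{Tor}$ terms precisely where transversality fails, and one must check that these excess contributions are exactly what the refined intersection product in Borel--Moore homology records (this is where an Euler-characteristic argument over the alternating sum of $\operatorname{Tor}$ sheaves enters, in the spirit of Serre's intersection multiplicity). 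You also should note explicitly that the Harish-Chandra condition forces $\supp(\cH)\subset\fM\times_\fN\fM$, which is what guarantees the Euler class lands in $H^{BM}_{\operatorname{top}}(\fM\times_\fN\fM)$ rather than in the homology of some larger Lagrangian; you gesture at this in your last paragraph but it belongs up front. With these adjustments, the outline is the right one.
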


A number of examples of these resolutions are discussed in \cite[\S
9]{BLPWgco}.  These include:
\begin{itemize}
\item the Springer resolutions of nilcones, and the induced
  resolutions of Slodowy slices.
\item Nakajima quiver varieties \cite{Nak94,Nak98} (also discussed
  below), which are geometric avatars of the representations of Lie algebras.
\item Hypertoric varieties \cite{Pr07}, which are quaternionic
  analogues of toric varieties, and give geometric versions of various
  notions in the theory of hyperplane arrangements.
\item Lusztig slices in affine Grassmannians \cite{KWWY}.  These also
  serve to geometrize representations of Lie algebras, but in a way
  ``dual'' to Nakajima quiver varieties.  
\end{itemize}
In \cite[\S 10]{BLPWgco}, we propose a notion of duality for these
symplectic resolutions.  While many complex structures appear in the
conjectured duality, the one closest to this paper is that there is a
category $\mathcal{O}$ of special modules over $\cD\mmod$, and a dual
category $\mathcal{O}^!$ for the symplectic dual variety.
\begin{conjecture}[Braden-Licata-Proudfoot-Webster]
  There is an isomorphism $K(\mathcal{O})\cong K(\mathcal{O}^!)$ such
  that the induced actions of $K(\HC)$ and $K(\HC^!)$ commute and form
  a dual pair (in the sense of Schur-Weyl duality).  
\end{conjecture}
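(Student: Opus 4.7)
The plan is to attack the conjecture by first producing a canonical bijection between simple objects on the two sides, then promoting it to an isomorphism of $K$-groups, and finally verifying compatibility of the two module structures via Euler class calculations. For the symplectic resolutions of interest, the category $\cO$ is defined by requiring that $h$-weights are bounded below, so its simple objects are labeled by the $\bS$-fixed points $\fM^\bS$ (coming from local classification of holonomic modules supported on stable attracting sets). Symplectic duality, as part of its data, must therefore include a canonical bijection $\fM^\bS \leftrightarrow (\fM^!)^\bS$; the first step is to identify this bijection from the combinatorics of the two sides (in the type A Nakajima / affine Grassmannian case it is the one produced by geometric Satake, in the hypertoric case it is Gale duality of bounded/feasible sign vectors, and in general it should arise from comparing the BBD-type stratifications of $\fM$ and $\fM^!$ by $\bS$-attracting cells).

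The second step is to compute the action of $K(\HC)$ on $K(\cO)$ entirely in terms of the fixed point basis, using the theorem just proven: the class of a Harish-Chandra bimodule $\cH$ acting on a simple $L_x$ can be read off from the Euler class $e(\cH) \in H^{BM}_{\operatorname{top}}(\fZ)$ restricted to a neighborhood of the fixed point $x$. After passing to an equivariant formulation over the torus in $\bS$, the structure constants of this action become rational functions in the equivariant parameters, governed by normal weights at the fixed points. The same is done on the dual side for $K(\HC^!)$ acting on $K(\cO^!)$. I would then check that under the combinatorial identification, the matrix coefficients of one action are obtained from those of the other by swapping ``horizontal'' (Steinberg-normal) and ``vertical'' (framing/flavor) equivariant parameters — this swap of roles for equivariant and flavor tori is the hallmark of symplectic duality and is exactly the statement that the two actions form a Schur–Weyl dual pair in the sense of mutual commutants.

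Along the way one must verify that the $K$-group isomorphism is really canonical and not just a bijection of bases: here the natural approach is to show both $K(\cO)$ and $K(\cO^!)$ carry nondegenerate bilinear forms (the Euler form and its twisted variant coming from the shuffling/Ringel duality functors studied in \cite{BLPWgco}), and that the candidate isomorphism identifies one with the other. This rigidifies the bijection and is what lets the two $K(\HC)$-actions be genuinely dual rather than merely of the same shape.

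The main obstacle, I expect, is making sense of the claim ``the actions commute'' in a framework where $K(\HC)$ is only defined on one variety and $K(\HC^!)$ on the other; there is no ambient category on which both act simultaneously. So one must manufacture such an action by either (i) constructing a ``dual Harish–Chandra'' action on $\cO$ directly from the geometry of $\fM$, whose categorification lives inside $\cO$ itself (a categorified version of a Gelfand–Tsetlin or Bethe subalgebra), and then identifying it with $K(\HC^!)$; or (ii) producing a single geometric object, such as a quantization of $\fM \times \fM^!$ or a category of $(\cD,\cD^!)$-bimodules, which restricts to both. Option (i) is the strategy that actually succeeds in the Nakajima/affine Grassmannian case via $R$-matrix constructions, and I would pursue it in general: the hard technical heart is proving that this ``hidden'' action on $\cO$ really is categorified by $\HC^!$ and not just by some abstract algebra with the right characters.
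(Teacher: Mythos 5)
This is stated in the paper as a \emph{conjecture}, attributed to Braden--Licata--Proudfoot--Webster, and the paper offers no proof of it. So there is no argument in the paper for you to match; what you have written is a research program, not a proof, and it should be evaluated on those terms. As a program it is thoughtful and broadly in line with how people have actually approached pieces of this conjecture (fixed-point bases, equivariant index/Euler-class computations, and $R$-matrix constructions in the Nakajima case), but it leaves the genuinely hard steps as acknowledged black boxes, so it does not resolve the statement.

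Two concrete issues you should fix before this could be sharpened into a proof. First, there is a conflation of tori: the conical $\bS\cong\C^\times$ acting with positive weight on $\Omega$ is \emph{not} the torus whose fixed points index the simples of category $\cO$, and ``$h$-weights bounded below'' is not the defining condition of $\cO$. Category $\cO$ here requires a choice of Hamiltonian torus $\bT$ (acting with weight zero on $\Omega$) together with a generic cocharacter $\xi\colon\C^\times\to\bT$; the simples are indexed by the components of $\fM^\xi$ (assuming they are isolated, which is an additional hypothesis), and the bijection that symplectic duality must supply is between these, not between $\fM^\bS$ and $(\fM^!)^\bS$. This distinction matters for your step two as well, since the equivariant parameters you propose to swap live on $\bT$ and its dual, not on $\bS$. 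Second, and more seriously, your option (i) --- manufacture a hidden commuting action on $\cO$ and then identify its $K$-theory with $K(\HC^!)$ --- is exactly where all the content of the conjecture lives, and you say so yourself. ``Identify it with $K(\HC^!)$ and not just some algebra with the right characters'' is not a step you have outlined how to perform; it requires knowing what $\fM^!$ and $\HC^!$ even are in general, which is itself an unsolved part of symplectic duality. As written, the proposal reduces the conjecture to an equally open conjecture, which is reasonable for a survey discussion but is not a proof, and it is important you recognize that the paper also does not claim one.
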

Schur-Weyl duality itself arises from considering this for flag
varieties in type A.  Skew-Howe duality (for type A quiver
varieties/Lusztig slices) and rank-level duality (for affine type A
quiver varieties/Lusztig slices) are special cases.

Now, let use turn to dicsussing some examples in more detail:

\subsection{Flag varieties}
\label{sec:flag-varieties}

The most familiar example is when $\fM=T^*G/B$.  In this case, we have
little to say beyond the results of Section
\ref{sec:flag-varieties-1}, since $\cD$-modules on $T^*G/B$ are just
D-modules on $G/B$ by \cite[\S 4.1]{BLPWquant}, and the categories $D^b(\HC)$
is essentially just $D^b\mathsf{Sh}_G(G/B\times G/B)$ actually
coincide, since the Steinberg $\fM\times_{\fN}\fM$ is just the union
of conormal bundles to $G$-orbits in $G/B\times G/B$.  However, this gives a
slightly different perspective, using the Springer resolution.  The cohomological
side of this picture is discussed in great detail in \cite{CG97}.  The
most important results are that:
\begin{theorem}
  We have an algebra isomorphism
  $H^{BM}_{\operatorname{top}}(\fM\times_{\fN}\fM)\cong \C[W]$.  The
  induced representations on the homology of the fibers of $\nu$ are
  the Springer representations.
\end{theorem}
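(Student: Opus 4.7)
The plan is to start from the Bruhat decomposition of $G/B\times G/B$: the $G$-orbits $Y_w$ are indexed by $w\in W$, each of dimension $\dim G/B$, and their conormal bundles $T^*_{Y_w}(G/B\times G/B)$ sit inside $T^*(G/B\times G/B) = \fM\times\bar\fM$. Using the symplectic identification $\fM\cong\bar\fM$ and the moment map description of $\fN$, one checks that the Steinberg variety $\fZ=\fM\times_{\fN}\fM$ is precisely the union $\bigcup_{w\in W}\overline{T^*_{Y_w}(G/B\times G/B)}$. Each component is irreducible of dimension $2\dim G/B = \dim\fM$, so top Borel--Moore homology has a basis $\{[\Lambda_w]\}_{w\in W}$ indexed by the Weyl group; this immediately identifies $H^{BM}_{\operatorname{top}}(\fZ)$ with $\C[W]$ as a vector space.

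The content is that this basis is compatible with convolution and reproduces the group-algebra multiplication. The cleanest route is to leverage the function--sheaf story of Section \ref{sec:flag-varieties-1}: intersection cohomology sheaves of the orbit closures $\overline{Y_w}$ form a basis of (the categorification of) the Hecke algebra $\mathcal{H}_v(W)$ under convolution, and characteristic cycle / Euler class yields a ring homomorphism from $K(D^b\mathsf{Sh}_G(G/B\times G/B))$ to $H^{BM}_{\operatorname{top}}(\fZ)$ specializing $v\mapsto 1$. So the convolution algebra $H^{BM}_{\operatorname{top}}(\fZ)$ is a quotient of $\mathcal{H}_1(W) = \C[W]$. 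A dimension count using the basis $\{[\Lambda_w]\}$ shows the map is an isomorphism. An equivalent and more self-contained approach is to verify the braid and quadratic relations directly on the generators $[\Lambda_{s}]$ for simple reflections $s$, using that the corresponding $Y_s$ is a $\mathbb{P}^1$-bundle over the partial flag variety $G/P_s$ and reducing the computation to the case of $SL_2$, where both relations are a direct intersection-theoretic calculation in $T^*\mathbb{P}^1\times_{\fN}T^*\mathbb{P}^1$.

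For the second assertion, fix $x\in\fN$ and let $\mathcal{B}_x = \nu^{-1}(x)$ be the Springer fiber. The general convolution formalism recalled in the excerpt, applied with $\fL=\mathcal{B}_x$ (which is $\nu^{-1}$-saturated since it is a single fiber), gives a degree-preserving action of $H^{BM}_{\operatorname{top}}(\fZ)$ on $H^*_{\mathcal{B}_x}(\fM;\C)\cong H^{BM}_*(\mathcal{B}_x)$. Via the isomorphism just constructed this becomes a $\C[W]$-action, and restricting to the top-degree summand $H^{BM}_{\operatorname{top}}(\mathcal{B}_x)$ produces a finite-dimensional $W$-representation. To identify it with the Springer representation, I would either (i) check agreement on generators $[\Lambda_s]$, where the formula $\al\star\gamma = (p_2)_*(\al\cdot p_1^*\gamma)$ becomes the classical $\mathbb{P}^1$-bundle correspondence whose action on $H^{BM}_*(\mathcal{B}_x)$ is Springer's original definition, or (ii) invoke Ginzburg's theorem from \cite{CG97} identifying this very convolution action with the Springer action.

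The main obstacle is the multiplicativity of the identification $H^{BM}_{\operatorname{top}}(\fZ)\cong\C[W]$: just producing a basis indexed by $W$ is formal, but proving the convolution product equals the group product requires either the sheaf-theoretic specialization argument (which hides the difficulty inside the decomposition theorem underlying \cite{Webcomparison}) or an explicit intersection computation in the simple-reflection case together with a braid-relation check, which is the technical heart of Section 3 of \cite{CG97}.
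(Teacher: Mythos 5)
The paper does not actually prove this theorem: it is stated as a known result, with the surrounding text explicitly deferring to Chriss--Ginzburg (\cite{CG97}) for the details. So there is no ``paper's proof'' to compare against; what you have written is essentially a sketch of the argument as it appears in \cite{CG97}, plus a proposed shortcut via the Hecke-algebra material from Section \ref{sec:flag-varieties-1}.

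Your overall architecture is correct and is the standard one: identify the irreducible components of the Steinberg variety with closures of conormal bundles to $G$-orbits on $G/B\times G/B$, observe that these are all of dimension $2\dim G/B$ so that $H^{BM}_{\operatorname{top}}(\fZ)$ has a $W$-indexed basis of fundamental classes, then prove multiplicativity either by reduction to simple reflections and an $SL_2$ computation or by relating the convolution algebra to the Hecke algebra. You also correctly flag that multiplicativity is where all the work lies. One genuine error in the setup: the $G$-orbits $Y_w$ on $G/B\times G/B$ are \emph{not} all of dimension $\dim G/B$; $Y_w$ has dimension $\dim G/B + \ell(w)$. What is true, and what you actually use, is that their conormal bundles all have the middle dimension $2\dim G/B$ regardless of $\dim Y_w$; so the slip does not propagate, but it should be corrected. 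A second point to be careful about: the passage from $K(D^b\mathsf{Sh}_G(G/B\times G/B))\cong \mathcal{H}_v(W)$ to $H^{BM}_{\operatorname{top}}(\fZ)$ via characteristic cycles is more delicate than ``specialize $v\mapsto 1$.'' Characteristic cycle is not naively multiplicative on $K$-theory, there are signs coming from shifts (which is why one often finds $v\mapsto -1$ rather than $v\mapsto 1$ in this story), and the fact that CC commutes with convolution is itself a nontrivial theorem. Your route (ii), verifying the quadratic and braid relations on the classes $[\Lambda_s]$ by reducing to $T^*\mathbb{P}^1$, is the more self-contained path and is closer to what \cite{CG97} actually carries out; if you want the Hecke-algebra shortcut to be honest you would need to cite or prove the multiplicativity of CC, not just allude to it. The identification with Springer's representation via the $\mathbb{P}^1$-bundle correspondences on generators is the right idea and matches the classical argument.
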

Thus, any appropriate category of D-modules on $G/B$ gives a
categorified representation of the Weyl group $W$.

\subsection{Quiver varieties}
\label{sec:quiver-varieties}

Nakajima defined a remarkable set of varieties attached to finite
graphs, called {\bf quiver varieties}.  We'll leave the details of
this construction to other papers \cite{Nak94,Nak98}.  Let $I$ be the
vertex set of an oriented Dynkin diagram\footnote{We'll only consider
  the Dynkin case for simplicity, but the results carry over the
  general case with various minor caveats.}. For each pair of
dimension vectors $\Bv, \Bw$, we consider the weights $\la=\sum
w_i\omega_i,\mu=\la-\sum v_i\al_i$.  The quiver variety $\fM(\Bv,\Bw)$
(as defined in \cite[(3.5)]{Nak98})
has geometry which reflects the structure of the $\mu$-weight space of
the representation with highest weight $\la$.  It is often useful to consider
the union $\sqcup_{\Bv}\fM(\Bv,\Bw)$ which controls the whole
structure of the representation.  Nakajima also defines a Lagrangian
subvariety $\fZ\subset \sqcup_{\Bv,\Bv'}\fM(\Bv,\Bw)\times
\fM(\Bv',\Bw)$ which plays a similar role to the Steinberg.
The principal results of \cite{Nak98} show that:
\begin{theorem}\label{th:Nakajima}
  We have a surjective algebra map 
  $U(\mathfrak{g})\to H^{BM}_{\operatorname{top}}(\fZ)$.  The
  induced representations on the top homology of the fibers of $\nu$
  are the different irreducible representations with highest weight
  $\leq \la$.  
\end{theorem}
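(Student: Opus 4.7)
The plan is to realize $U(\mathfrak{g})$ inside $H^{BM}_{\operatorname{top}}(\fZ)$ by exhibiting explicit geometric classes that play the role of the Chevalley generators, verifying the defining relations via intersection theory on the correspondences, and then identifying the resulting module structure on homology of fibers as a standard highest weight module.

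First I would define the operators. For each vertex $i\in I$ and each dimension vector $\Bv$, Nakajima's Hecke correspondence is a Lagrangian subvariety $\fZ_i^+(\Bv,\Bw)\subset \fM(\Bv,\Bw)\times \fM(\Bv+e_i,\Bw)$ parametrizing pairs of quiver representations that differ by a one-dimensional subrepresentation supported at $i$; set $\fZ_i^-:=(\fZ_i^+)^{\operatorname{op}}$. One sends the Chevalley generators to the sums $E_i\mapsto \sum_\Bv[\fZ_i^+(\Bv,\Bw)]$ and $F_i\mapsto \sum_\Bv\epsilon(\Bv)[\fZ_i^-(\Bv,\Bw)]$ for suitable signs $\epsilon(\Bv)$, and the Cartan generators to sums of scaled diagonal classes $H_i\mapsto \sum_\Bv \langle\la-\sum v_j\al_j,\al_i^\vee\rangle[\Delta_{\fM(\Bv,\Bw)}]$. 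These classes all lie in $H^{BM}_{\operatorname{top}}(\fZ)$ because $\fZ_i^\pm$ are components of the Nakajima Steinberg, and the convolution product on $H^{BM}_{\operatorname{top}}(\fZ)$ is as defined earlier in the excerpt.

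Second, I would check the defining relations of $U(\mathfrak{g})$. The weight relations $[H_i,E_j]=a_{ij}E_j$ and $[H_i,F_j]=-a_{ij}F_j$ are immediate from the fact that $\fZ_j^{\pm}$ shifts $\Bv$ by $\pm e_j$ and convolution with a diagonal class acts by its scalar. The key commutator $[E_i,F_j]=\delta_{ij}H_i$ reduces to comparing the two convolutions $\fZ_i^+\star\fZ_j^-$ and $\fZ_j^-\star\fZ_i^+$; set-theoretically both are unions of components parametrizing pairs of Hecke modifications, and for $i\neq j$ the fiber products are transverse and the contributions cancel, while for $i=j$ one must analyze the excess intersection along the diagonal, producing exactly the diagonal class with coefficient $\langle\mu(\Bv),\al_i^\vee\rangle$. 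The Serre relations $\operatorname{ad}(E_i)^{1-a_{ij}}(E_j)=0$ follow from an explicit stratification of iterated Hecke correspondences $\fZ_i^+\star\cdots\star \fZ_i^+\star\fZ_j^+$ together with a symmetrization argument on the fibers of the forgetful maps.

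Third, surjectivity follows from a standard argument: by induction on the dimension vector, every irreducible component of $\fZ$ arises as a component of an iterated convolution of Hecke correspondences, so the classes $[\fZ_i^\pm]$ together with diagonal classes generate $H^{BM}_{\operatorname{top}}(\fZ)$ under convolution. Finally, to identify the representations on $H^{BM}_{\operatorname{top}}(\nu^{-1}(p))$, for $p=0$ one checks that $\bigoplus_\Bv H^{BM}_{\operatorname{top}}(\fL(\Bv,\Bw))$ is a highest weight module of highest weight $\la$ (the vector $[\fL(0,\Bw)]$ is annihilated by all $E_i$), then matches weight-space dimensions against the Weyl character formula or uses the explicit description of $\fL(e_i,\Bw)$ to show it generates under the $F_i$, hence is irreducible; for general $p$ one uses the transverse slice to a point in $\fN$ and identifies the local fiber with a smaller quiver variety whose highest weight is determined by the stratum of $p$. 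The main obstacle is the geometric verification of the Serre relations: this is not a formal manipulation but requires a delicate analysis of the singular components of iterated Hecke correspondences, which is the technical core of \cite{Nak98}.
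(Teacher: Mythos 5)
The paper gives no argument for this theorem; it is presented as a quotation of ``the principal results of \cite{Nak98}.'' So the relevant comparison is with Nakajima's actual proof, and your outline follows the same overall strategy: build $e_i,f_i,h_i$ from Hecke correspondences and diagonal classes, verify the defining relations by convolution/intersection theory, check surjectivity, and analyze the modules $\bigoplus_{\Bv}H^{BM}_{\mathrm{top}}(\fL(\Bv,\Bw))$ by showing they are integrable highest weight. The construction of the generators, the weight relations, and the treatment of $[e_i,f_j]=\delta_{ij}h_i$ as an excess-intersection computation along the diagonal are all essentially as in \cite{Nak98}. Your observation that fibers over other points of $\fN$ reduce to smaller quiver varieties via transverse slices is also the right mechanism for ``highest weight $\leq\la$.''

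Where your outline diverges, and where I think it would get into trouble if pushed, is the Serre relations. You propose to verify $\operatorname{ad}(e_i)^{1-a_{ij}}(e_j)=0$ directly by stratifying iterated Hecke correspondences and call this ``the technical core'' of \cite{Nak98}. Nakajima does not do this, and for good reason: such a direct geometric verification would be painful and is not needed. The standard argument (used by Nakajima, and going back to Kac's book) is that once you have the $\mathfrak{sl}_2$-triple relations for each $i$ together with the weight relations, the Serre relations hold automatically in any \emph{integrable} module, and integrability here follows from the finiteness/properness of the Hecke correspondences (each $e_i,f_i$ is locally nilpotent because the relevant dimension vectors are bounded). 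So the Serre relations are a formal consequence, not the hard geometric step; the genuinely hard geometric inputs are the commutator relation, the fact that each convolution is well-defined and lands in middle-dimensional Borel--Moore homology, and surjectivity. On that last point, your ``every component of $\fZ$ is a component of an iterated convolution of Hecke correspondences'' needs justification: it is not obvious a priori which components of $\fZ$ appear, and Nakajima's surjectivity argument is a separate, nontrivial step (using the structure of the Steinberg and the irreducibility of the lowest-weight piece rather than a naive induction on components). You should also note that the map is only claimed onto $H^{BM}_{\mathrm{top}}$, not the full Borel--Moore homology; the kernel is the ideal $\operatorname{Ann}(V_\la)\cap\cdots$ cutting out the relevant quotient of $U(\mathfrak{g})$, which is implicit in ``highest weight $\leq\la$.''
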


In this case, the category of Harish-Chandra bimodules is
a categorification of a quotient of the universal enveloping algebra.
While we can study it in its own terms, it is also closely
related to the categorification of these enveloping algebras as
developed by Khovanov, Lauda and Rouquier \cite{KLIII, Rou2KM}.  In
fact, it is a quotient of the categorified universal enveloping
algebra, in a certain sense.
\begin{theorem}[\mbox{\cite[3.1, 3.3 \& 3.9]{Webcatq}}]
  There is a 2-functor $\tU\to \HC$ which is ``surjective''
  (i.e. essentially surjective and full) categorifying the map of
  Theorem \ref{th:Nakajima}.
\end{theorem}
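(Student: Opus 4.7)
My plan is to construct the 2-functor explicitly, verify the categorified relations, and then address essential surjectivity and fullness. At the level of objects, I send a weight $\mu = \la - \sum v_i \al_i$ of $\tU$ (with $\mu \leq \la$) to the quantization $\cD_{\Bv,\Bw}$ of the Nakajima quiver variety $\fM(\Bv,\Bw)$. The generating 1-morphisms $\mathcal{E}_i$ and $\mathcal{F}_i$ go to the specific Harish-Chandra bimodules obtained by quantizing Nakajima's Hecke correspondences $\mathfrak{P}_i \subset \fM(\Bv,\Bw)\times\overline{\fM(\Bv-e_i,\Bw)}$: these correspondences are half-dimensional and conical, hence admit canonical quantizations to objects of $\HC$. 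By the Kashiwara-Schapira Euler class formalism recalled above, these quantized bimodules have Euler classes equal to the fundamental classes of the Hecke correspondences, so their images in $K(\HC)$ realize the operators on top Borel-Moore homology from Theorem \ref{th:Nakajima}.

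Next, I would define the action on 2-morphisms by producing an action of the KLR algebra on tensor products $\mathcal{E}_{i_1}\star\cdots\star\mathcal{E}_{i_n}$. Much as in the $\mathfrak{sl}_2$ discussion of Section \ref{sec:grassm-mathfr}, this composite bimodule is the quantization of an iterated Hecke correspondence whose geometry involves flags of successive quotients by simple modules at the vertices $i_1,\ldots,i_n$. Following Varagnolo-Vasserot and Rouquier, the KLR algebra $R$ has a geometric realization as an $\Ext$-algebra of the corresponding Steinberg-type variety, and quantizing this realization produces the desired 2-morphisms $R \to \End(\mathcal{E}_{i_1}\star\cdots\star\mathcal{E}_{i_n})$ in $\HC$. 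Biadjointness of $\mathcal{E}_i$ and $\mathcal{F}_i$, which furnishes the cups and caps of $\tU$, follows from the properness of the maps from Hecke correspondences to either factor.

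I would then verify the defining relations of $\tU$. The KLR and nilHecke relations are built into the geometric construction above. The categorified commutator isomorphisms for $[\mathcal{E}_i, \mathcal{F}_j]$ and the Serre isomorphisms would follow from identifying quantizations of the relevant intersection diagrams of Hecke correspondences; at the level of Euler classes they reduce to the commutator and Serre relations in Nakajima's representation, which hold by Theorem \ref{th:Nakajima}. Essential surjectivity is then immediate since every $\cD_{\Bv,\Bw}$ is in the image by construction.

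The main obstacle is fullness: one must show that every 2-morphism in $\HC$ between composites of the $\mathcal{E}_i$ and $\mathcal{F}_i$ comes from $\tU$. My strategy would be a dimension-plus-injectivity argument. On the target side, $\Hom$-spaces between Harish-Chandra bimodules can be computed by the $\Ext$-pairing theorem stated earlier in this section, which expresses their Euler characteristic as an intersection pairing of Euler classes on the Steinberg variety. On the source side, the KLR algebra has the correct graded dimension to match this pairing, and the map from KLR 2-morphisms to bimodule maps is injective because it remains so after passing to the associated graded, where it recovers the classical geometric realization of \cite{VV}. Dimension equality then forces surjectivity, yielding the full and essentially surjective 2-functor.
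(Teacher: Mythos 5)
The paper itself offers no proof of this statement---it is cited from \cite{Webcatq}---so your proposal can only be compared against the strategy of that reference, which it matches in broad outline (quantize Hecke correspondences to get the 1-morphisms; KLR-type 2-morphisms; use Euler classes to match the decategorified picture). However, there are two places where your argument, as written, has a genuine gap rather than a detail to be filled in.

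First, your fullness argument runs through the $\Ext$-pairing theorem, which only computes the \emph{Euler characteristic} $\sum_i(-1)^i\dim\Ext^i(\cM,\cN)$, not the dimension of $\Hom=\Ext^0$ or the graded dimension of $\Ext^\bullet$. Agreement of Euler characteristics plus injectivity does not force surjectivity; you would additionally need a vanishing statement (higher $\Ext$'s vanish) or a formality/purity statement that converts Euler characteristics into honest graded dimensions. Absent that, ``dimension equality then forces surjectivity'' is a non-sequitur. This is exactly the hard part, and in \cite{Webcatq} it is handled by exhibiting enough algebraic structure on $\HC$ (via an explicit generators-and-relations presentation tied to category $\mathcal{O}$ and its Koszul grading) rather than by a bare index computation.

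Second, the step ``the map from KLR 2-morphisms to bimodule maps is injective because it remains so after passing to the associated graded, where it recovers the classical geometric realization of \cite{VV}'' conflates two different geometric worlds. The associated graded of a Harish-Chandra bimodule over the quantization is a coherent sheaf supported on the Steinberg variety $\fM\times_{\fN}\fM$, whereas the Varagnolo--Vasserot realization of the KLR algebra lives in the constructible derived category of the quiver moduli stack $E_{\Bd}/G_{\Bd}$. These are related, but by something closer to Koszul duality (or a Riemann--Hilbert-type comparison) than by ``passing to the associated graded,'' and the identification is itself a theorem requiring proof. So the injectivity input you are relying on is not free. Until you specify the bridge between the $\cD$-module/coherent picture and the constructible picture, and supply the missing vanishing statement, the fullness claim remains unproven even assuming the rest of the construction.
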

Thus, when applied to quiver varieties, we obtain a geometric avatar
of the 2-category $\tU$.  As in previous cases, this gives a natural
geometric avatar for canonical bases, this time of the whole universal
enveloping algebra \cite[3.13]{Webcatq}.

 \bibliography{../gen}

\def\cprime{$'$}
\providecommand{\bysame}{\leavevmode\hbox to3em{\hrulefill}\thinspace}
\providecommand{\MR}{\relax\ifhmode\unskip\space\fi MR }
\providecommand{\MRhref}[2]{%
  \href{http://www.ams.org/mathscinet-getitem?mr=#1}{#2}
}
\providecommand{\href}[2]{#2}
\begin{thebibliography}{KWWY}

\bibitem[Ati67]{AtiyahK}
M.~F. Atiyah, \emph{{$K$}-theory}, Lecture notes by D. W. Anderson, W. A.
  Benjamin, Inc., New York-Amsterdam, 1967. \MR{0224083 (36 \#7130)}

\bibitem[BL94]{BL}
Joseph Bernstein and Valery Lunts, \emph{Equivariant sheaves and functors},
  Lecture Notes in Mathematics, vol. 1578, Springer-Verlag, Berlin, 1994.
  \MR{MR1299527 (95k:55012)}

\bibitem[BLPW]{BLPWgco}
Tom Braden, Anthony Licata, Nicholas Proudfoot, and Ben Webster,
  \emph{Quantizations of conical symplectic resolutions {II}: category
  {$\cO$}}, \arxiv{1407.0964}.

\bibitem[BPW]{BLPWquant}
Tom Braden, Nicholas~J. Proudfoot, and Ben Webster, \emph{Quantizations of
  conical symplectic resolutions {I}: local and global structure},
  \arxiv{1208.3863}.

\bibitem[BS58]{BorSer}
Armand Borel and Jean-Pierre Serre, \emph{Le th\'eor\`eme de {R}iemann-{R}och},
  Bull. Soc. Math. France \textbf{86} (1958), 97--136. \MR{0116022 (22 \#6817)}

\bibitem[CG97]{CG97}
Neil Chriss and Victor Ginzburg, \emph{Representation theory and complex
  geometry}, Birkh\"auser Boston Inc., Boston, MA, 1997. \MR{98i:22021}

\bibitem[dCM10]{WhatisPS}
Mark~Andrea de~Cataldo and Luca Migliorini, \emph{What is{$\ldots$} a perverse
  sheaf?}, Notices Amer. Math. Soc. \textbf{57} (2010), no.~5, 632--634.
  \MR{2664042}

\bibitem[Del77]{SGA4andahalf}
P.~Deligne, \emph{Cohomologie \'etale}, Lecture Notes in Mathematics, Vol. 569,
  Springer-Verlag, Berlin-New York, 1977, S{\'e}minaire de G{\'e}om{\'e}trie
  Alg{\'e}brique du Bois-Marie SGA 4${1{\o}er 2}$, Avec la collaboration de J.
  F. Boutot, A. Grothendieck, L. Illusie et J. L. Verdier. \MR{0463174 (57
  \#3132)}

\bibitem[GM99]{GrinMc}
Mikhail Grinberg and Robert MacPherson, \emph{Euler characteristics and
  {L}agrangian intersections}, Symplectic geometry and topology ({P}ark {C}ity,
  {UT}, 1997), IAS/Park City Math. Ser., vol.~7, Amer. Math. Soc., Providence,
  RI, 1999, pp.~265--293. \MR{1702946 (2000h:32044)}

\bibitem[Har77]{Hartshorne}
Robin Hartshorne, \emph{Algebraic geometry}, Springer-Verlag, New York, 1977,
  Graduate Texts in Mathematics, No. 52.

\bibitem[HTT08]{HTT}
Ryoshi Hotta, Kiyoshi Takeuchi, and Toshiyuki Tanisaki, \emph{{$D$}-modules,
  perverse sheaves, and representation theory}, Progress in Mathematics, vol.
  236, Birkh\"auser Boston Inc., Boston, MA, 2008, Translated from the 1995
  Japanese edition by Takeuchi.

\bibitem[Iwa64]{Iwahori}
Nagayoshi Iwahori, \emph{On the structure of a {H}ecke ring of a {C}hevalley
  group over a finite field}, J. Fac. Sci. Univ. Tokyo Sect. I \textbf{10}
  (1964), 215--236 (1964). \MR{0165016 (29 \#2307)}

\bibitem[Kar08]{KarK}
Max Karoubi, \emph{{$K$}-theory}, Classics in Mathematics, Springer-Verlag,
  Berlin, 2008, An introduction, Reprint of the 1978 edition, With a new
  postface by the author and a list of errata. \MR{2458205 (2009i:19001)}

\bibitem[KL10]{KLIII}
Mikhail Khovanov and Aaron~D. Lauda, \emph{A categorification of quantum {${\rm
  sl}(n)$}}, Quantum Topol. \textbf{1} (2010), no.~1, 1--92. \MR{2628852
  (2011g:17028)}

\bibitem[KS]{KSdq}
Masaki Kashiwara and Pierre Schapira, \emph{Deformation quantization modules},
  \arxiv{1003.3304}.

\bibitem[KW01]{KW}
Reinhardt Kiehl and Rainer Weissauer, \emph{Weil conjectures, perverse sheaves
  and {$l$}'adic {F}ourier transform}, Ergebnisse der Mathematik und ihrer
  Grenzgebiete. 3. Folge. A Series of Modern Surveys in Mathematics [Results in
  Mathematics and Related Areas. 3rd Series. A Series of Modern Surveys in
  Mathematics], vol.~42, Springer-Verlag, Berlin, 2001.

\bibitem[KWWY]{KWWY}
Joel Kamnitzer, Ben Webster, Alex Weekes, and Oded Yacobi, \emph{{Yangians and
  quantizations of slices in the affine Grassmannian}}, \arxiv{1209.0349}.

\bibitem[Lau10]{Laucq}
Aaron~D. Lauda, \emph{A categorification of quantum {${\rm sl}(2)$}}, Adv.
  Math. \textbf{225} (2010), no.~6, 3327--3424. \MR{2729010 (2012b:17036)}

\bibitem[Lau11]{LauSL2}
\bysame, \emph{Categorified quantum {$\rm sl(2)$} and equivariant cohomology of
  iterated flag varieties}, Algebr. Represent. Theory \textbf{14} (2011),
  no.~2, 253--282. \MR{2776785}

\bibitem[Lus91]{Lus91}
George Lusztig, \emph{Quivers, perverse sheaves, and quantized enveloping
  algebras}, J. Amer. Math. Soc. \textbf{4} (1991), no.~2, 365--421.
  \MR{MR1088333 (91m:17018)}

\bibitem[Lus93]{Lusbook}
\bysame, \emph{Introduction to quantum groups}, Progress in Mathematics, vol.
  110, Birkh\"auser Boston Inc., Boston, MA, 1993.

\bibitem[Mac74]{McChern}
R.~D. MacPherson, \emph{Chern classes for singular algebraic varieties}, Ann.
  of Math. (2) \textbf{100} (1974), 423--432. \MR{0361141 (50 \#13587)}

\bibitem[Mil80]{MilneEC}
James~S. Milne, \emph{\'{E}tale cohomology}, Princeton Mathematical Series,
  vol.~33, Princeton University Press, Princeton, N.J., 1980. \MR{559531
  (81j:14002)}

\bibitem[Mil13]{milneLEC}
\bysame, \emph{Lectures on etale cohomology (v2.21)}, 2013, Available at
  \url{www.jmilne.org/math/}, p.~202.

\bibitem[Nak94]{Nak94}
Hiraku Nakajima, \emph{Instantons on {ALE} spaces, quiver varieties, and
  {K}ac-{M}oody algebras}, Duke Math. J. \textbf{76} (1994), no.~2, 365--416.
  \MR{MR1302318 (95i:53051)}

\bibitem[Nak98]{Nak98}
\bysame, \emph{Quiver varieties and {K}ac-{M}oody algebras}, Duke Math. J.
  \textbf{91} (1998), no.~3, 515--560. \MR{MR1604167 (99b:17033)}

\bibitem[Pro08]{Pr07}
Nicholas Proudfoot, \emph{A survey of hypertoric geometry and topology}, Toric
  Topology, Contemp. Math., vol. 460, Amer. Math. Soc., Providence, RI, 2008,
  pp.~323--338.

\bibitem[Rin90]{Ringel}
Claus~Michael Ringel, \emph{Hall algebras and quantum groups}, Invent. Math.
  \textbf{101} (1990), no.~3, 583--591. \MR{1062796 (91i:16024)}

\bibitem[Roua]{Rou2KM}
Raphael Rouquier, \emph{2-{K}ac-{M}oody algebras}, \arxiv{0812.5023}.

\bibitem[Roub]{RouQH}
\bysame, \emph{Quiver {H}ecke algebras and 2-{L}ie algebras},
  \arxiv{1112.3619}.

\bibitem[Sch]{SchiffHall}
Olivier Schiffmann, \emph{Lectures on {H}all algebras}, \arxiv{math/0611617}.

\bibitem[Ser56]{GAGA}
Jean-Pierre Serre, \emph{G\'eom\'etrie alg\'ebrique et g\'eom\'etrie
  analytique}, Ann. Inst. Fourier, Grenoble \textbf{6} (1955--1956), 1--42.

\bibitem[VV11]{VV}
Michela Varagnolo and Eric Vasserot, \emph{Canonical bases and {KLR}-algebras},
  J. Reine Angew. Math. \textbf{659} (2011), 67--100.

\bibitem[Weba]{Webcatq}
Ben Webster, \emph{A categorical action on quantized quiver varieties},
  \arxiv{1208.5957}.

\bibitem[Webb]{Webcomparison}
\bysame, \emph{Comparison of canonical bases for {S}chur and universal
  enveloping algebras.}, \arxiv{1503.08734}.

\bibitem[WW]{WWequ}
Ben Webster and Geordie Williamson, \emph{The bounded below equivariant derived
  category}, In preparation. Preliminary version available at
  \url{http://people.virginia.edu/~btw4e/publications.html}.

\end{thebibliography}
\bibliographystyle{amsalpha}
\end{document}